\date{}
\newcommand{\V}[1]{\mbox{\boldmath $ #1 $}}
\newcommand{\tr}[1]{\text{tr} #1}
\newcommand{\mJ}[1]{\mathbb{J} #1}
\newcommand{\M}[1]{\mathbb{M} #1}
\theoremstyle{definition}
\newcommand{\bey}{\begin{eqnarray}}
\newcommand{\eey}{\end{eqnarray}}
\newcommand{\beq}{\begin{equation}}
\newcommand{\eeq}{\end{equation}}
\theoremstyle{plain}
\newtheorem{thm}{\hspace{6mm}Theorem}[section]
\newtheorem{co}{\hspace{6mm}Corollary}[section]
\theoremstyle{definition}
\theoremstyle{remark}
\newtheorem{exam}{\hspace{6mm}Example}[section]
\title{A New Functional for Variational Mesh Generation and Adaptation Based on Equidistribution
and Alignment Conditions}
\author{Avary~Kolasinski%
\thanks{Department of Mathematics, the University of Kansas, Lawrence, KS 66045
({\em avaryk@ku.edu}).}
\and Weizhang~Huang%
\thanks{Department of Mathematics, the University of Kansas, Lawrence, KS 66045
({\em whuang@ku.edu}).}
}
\begin{document}
\vskip 1cm
\maketitle

\begin{abstract}
A new functional is presented for variational mesh generation and adaptation.
It is formulated based on combining the equidistribution and alignment conditions
into a single condition with only one dimensionless parameter.
The functional is shown to be coercive but not convex.
A solution procedure using a discrete moving mesh partial differential
equation is employed. It is shown that the element volumes and altitudes of
a mesh trajectory of the mesh equation associated with the new functional
are bounded away from zero and the mesh trajectory stays nonsingular
if it is so initially. Numerical examples demonstrate that the new functional
performs comparably as an existing one that is also based on the equidistribution
and alignment conditions and known to work well but contains an additional parameter.
\end{abstract}

\noindent{\textbf{AMS 2010 Mathematics Subject Classification.}}
65N30, 65N50

\noindent{\textbf{Key Words.}}
Meshing functional, variational mesh generation,  variational mesh adaptation,
equidistribution, alignment, moving mesh.

\noindent{\textbf{Abbreviated title.} A New Functional for Variational Mesh Generation.}

\section{Introduction}

Variational mesh generation and adaptation has proven a useful tool in the numerical solution
of partial differential equations (PDEs); e.g., see \cite{C, HR, KS, L, TWM} and references therein.
In this, a (adaptive) mesh is generated as the image of a reference mesh under a coordinate transformation which is determined as the minimizer of a meshing functional. One of the main advantages of this approach
of mesh generation is that different mesh requirements such as smoothness, orthogonality, adaptivity,
alignment, etc. can easily be incorporated into the formulation of the meshing functional \cite{BS}.
In addition to being a method for mesh generation and adaptation, this approach can also be used
as a smoothing device for automatic mesh generation \cite{FG, HKS} and a base for
adaptive moving mesh methods \cite{HRR, HR99,HR, LTZ}.

There exists a vast literature on variational mesh generation and adaptation. A number of meshing functionals
have been developed from different problems and formulated based on different focused requirments.
For example, Winslow \cite{W} develops an equipotential method that is based on variable diffusion.
Brackbill and Saltzmann \cite{BS} combine mesh concentration, smoothness, and orthogonality to create a functional.
Dvinsky \cite{D} develops a method based on the energy of harmonic mappings.
Knupp \cite{K} and Knupp and Robidoux \cite{KR} focus on the idea of conditioning the Jacobian matrix
of the coordinate transformation.
Huang \cite{H} and Huang and Russell \cite{HR} have proposed two methods based on
the so-called equidistribution and alignment conditions. 

Compared to the algorithmic development, very few theoretical results are known.
For example, Dvinsky's meshing functional \cite{D} is guaranteed to have a unique invertible minimizer by
the theory of harmonic mappings between multidimensional domains. Winslow's functional \cite{W}
is known to have a unique minimizer due to its uniformly convexity and coercivity. Furthermore, the functional by Huang \cite{H} is coercive and polyconvex and thus has minimizers \cite{HR}.
Recently, a new formulation of the so-called moving mesh partial differential equation (MMPDE)
method \cite{HRR, HR99} was proposed by Huang and Kamenski \cite{HK2},
where the meshing functional is first discretized
and then the mesh equation (which will be referred to as the discrete MMPDE hereafter)
is defined as a gradient system of the discretized functional.
This new formulation provides an explicit, compact, and analytical formula for the mesh velocity, which
makes the implementation of the method much easier and more robust (cf. Section~\ref{SEC:mmpde}).
More importantly, several important properties of the discrete MMPDE can be established;
see \cite{HK1} and/or Section~\ref{SEC:theory} for detail. In particular, the mesh trajectory of the discrete MMPDE
stays nonsingular if it is so initially provided that the meshing functional under consideration satisfies
a coercivity condition (cf. (\ref{Coereq}) below).
To our best knowledge,  this is the only nonsingularity result at the discrete level
available in the context of variational mesh generation and adaptation and mesh movement.

It is noted that the functional of \cite{H} satisfies the coercivity condition for a large range
of its parameters. It works well with the framework of MMPDEs and has been successively
used for various applications \cite{HR}. The functional is formulated based on
the equidistribution and alignment conditions -- more precisely, based on an averaging
of the two conditions with a dimensionless parameter.  Although the performance
of the functional does not seem sensitive to the value of the parameter, its choice
is still arbitrary and there is hardly a convincing guideline for choosing it.


The objective of this paper is to present a new functional using the equidistribution and alignment conditions.
Like the existing functional of \cite{H}, this new one is also based on a combination of the two conditions
into a single one, but this time, without introducing any new parameter.
We will show that the new functional satisfies the coercivity condition
and has similar theoretical properties as the existing functional when employed with the MMPDE.
Two-dimensional numerical results will be presented to verify theoretical findings as well as demonstrate
comparable performances of the two functionals.

It is worth pointing out that variational mesh adaptation is a special type of anisotropic mesh adaptation
which has become an area of intensive research. There is a vast literature in this area;
for example, some of the earlier works are \cite{A, AD, BG, BH, CH, DS, FP, HDB, HM, PV, RL, VD, Y}.

An outline of this paper is as follows. In Section~\ref{SEC:functionals}, the equidistribution and alignment conditions
will be introduced and the existing and new functionals will be described.
The discrete MMPDE will be presented as a solution procedure for the minimization problem associated with
a meshing functional in Section~\ref{SEC:mmpde}.  Section~\ref{SEC:theory} is devoted to
the study of the theoretical properties of the new functional, followed by the numerical examples
in Section~\ref{SEC:numerics}. Finally, Section~\ref{SEC:conclusion} contains conclusions and further comments.

\section{Meshing functionals based on equidistribution and alignment conditions}
\label{SEC:functionals}

In this section we are going to describe two meshing functionals that are formulated from the equidistribution and alignment
conditions (cf. ($\ref{equ}$) and ($\ref{ali}$) below).  These conditions have been developed based on the concept
of uniform meshes in some metric tensor \cite{HR}. They provide total control of the mesh element size, shape,
and orientation of mesh elements through a metric tensor. One of the meshing functionals to be described
was first introduced in \cite{H} and involves averaging functionals associated with the two conditions.
It has a number of advantages (which will be discussed later) and is known to work well in practice but
involves two dimensionless parameters. The second functional is new. It is formulated by directly combining
the equidistribution and alignment conditions into a single condition which in turn has eliminated one of
the two parameters of the existing functional.

\subsection{The equidistribution and alignment conditions}

Let the physical domain, $\Omega\subset\mathbb{R}^d$, $d\ge 1$, be a bounded (not necessarily convex) polygonal or polyhedral domain and $\mathbb{M}=\mathbb{M}(\V x)$ be a given symmetric, uniformly positive definite metric tensor defined on $\Omega$ which satisfies
\begin{equation}
\underline{m}I\le \mathbb{M}(\V x)\le \overline{m}I,\quad\forall x\in\Omega,
\label{M-1}
\end{equation}
where $\underline{m}$ and $\overline{m}$ are positive constants and $I$ is the identity matrix.  
Our goal is to generate a simplicial mesh for $\Omega$ which is uniform with respect to the metric $\M$.
Denote this target mesh by $\mathcal{T}_h=\{K\}$ and let $N$ and $N_v$
be the number of its elements and vertices, respectively. 
Assume that the reference element $\hat{K}$ has been chosen to be equilateral and unitary (i.e., $|\hat{K}|=1$,
where $|\hat{K}|$ denotes the volume of $\hat{K}$).
For any element $K\in \mathcal{T}_h$ 
let $F_K:\hat{K}\to K$ be the affine mapping between them and $F_K'$ be its Jacobian matrix. Denote the vertices of $K$ by $\V x_j^K$, $j=0,...,d$ and the vertices of $\hat{K}$ by $\V \xi_j$, $j=0,...,d$. Then 
$$
\V x_j^K=F_K(\V\xi_j).
$$

With this in mind, we can define the equidistribution and alignment conditions
that completely characterize a non-uniform mesh. Indeed, any non-uniform mesh
can be viewed as a uniform one in some metric tensor. Using this viewpoint it is shown (e.g., see \cite{HR}) that a uniform mesh in the metric $\M$ satisfies
\begin{align}
\label{equ}
\text{equidistribution:} & \hspace{0.5cm} |K|\det(\mathbb{M}_K)^{\frac{1}{2}}=\frac{\sigma_h}{N},
~~~\forall K\in\mathcal{T}_h\\ 
\label{ali}
\text{alignment:} &\hspace{0.5cm}  \frac{1}{d}\tr\left((F_K')^{-1}\mathbb{M}_K^{-1}(F_K')^{-T}\right)=\det\left((F_K')^{-1}\mathbb{M}_K^{-1}(F_K')^{-T}\right)^{\frac{1}{d}},~~~\forall K\in\mathcal{T}_h
\end{align}
where $\M_K$ is the average of $\M$ over $K$ and 
\begin{equation}
\label{sigmah}
\sigma_h=\sum_{K\in\mathcal{T}_h}|K|\det(\mathbb{M}_K)^{\frac{1}{2}}.
\end{equation}
Notice that $|K|\det(\mathbb{M}_K)^{\frac{1}{2}}$ is the volume of $K$ in the metric $\mathbb{M}_K$ and thus
the equidistribution condition essentially requires that all of the elements have
the same volume with respect to the metric $\M$.
On the other hand, the left- and right-hand sides of the alignment condition (\ref{ali}) are the arithmetic mean
and geometric mean of the eigenvalues of the matrix $(F_K')^{-1}\mathbb{M}_K^{-1}(F_K')^{-T}$, respectively.
Thus, the condition implies that the eigenvalues of the matrix be equal, i.e., 
\begin{equation}
(F_K')^{-1}\mathbb{M}_K^{-1}(F_K')^{-T} = \theta_K I,
\label{ali2}
\end{equation}
where $\theta_K$ is a positive constant.
It can be shown \cite{HR} that geometrically, the condition (\ref{ali}) requires all elements $K$,
when measured in the metric $\mathbb{M}_K$, to be similar to the reference element $\hat{K}$.
Combining the equidistribution and alignment conditions, we see that if a mesh satisfies
both of them then all of its elements have the same volume and are similar to the reference element,
thus are uniform with respect to the metric $\M$.

\subsection{The existing functional}

We now describe the existing meshing functional based on the equidistribution and alignment conditions.  
First consider the equidistribution condition ($\ref{equ}$). From H\" older's inequality, for any $p>1$ then
\begin{equation}\label{inequ}\left(\sum_{K\in\mathcal{T}_h}\dfrac{|K|\det(\mathbb{M}_K)^{\frac{1}{2}}}{\sigma_h}\cdot\left(\dfrac{1}{|K|\det(\mathbb{M}_K)^{\frac{1}{2}}}\right)^p\right)^{\frac{1}{p}}\ge\sum_{K\in\mathcal{T}_h}\dfrac{|K|\det(\mathbb{M}_K)^{\frac{1}{2}}}{\sigma_h}\cdot\left(\dfrac{1}{|K|\det(\mathbb{M}_K)^{\frac{1}{2}}}\right),\end{equation}
with equality if and only if
$$
\frac{1}{|K|\det(\mathbb{M}_K)^{\frac{1}{2}}}=\text{constant},\quad \forall K\in \mathcal{T}_h.
$$
That is, minimizing the difference between the left-hand side and the right-hand side of ($\ref{inequ}$)
tends to make $1/(|K|\det(\mathbb{M}_K)^{\frac{1}{2}})$ constant for all $K\in\mathcal{T}_h$. 
Noticing that the right-hand side of ($\ref{inequ}$) is ${N}/{\sigma_h}$, we can rewrite this inequality into
\begin{equation}
\label{IN}
\sum_{K\in\mathcal{T}_h}|K|\det(\mathbb{M}_K)^{\frac{1}{2}}
\cdot\left(\dfrac{1}{|K|\det(\mathbb{M}_K)^{\frac{1}{2}}}\right)^p\ge\left(\dfrac{N}{\sigma_h}\right)^p\cdot \sigma_h.\end{equation}
Since $\sigma_h\approx \int_{\Omega}\det(\M)^{\frac{1}{2}}d\V x$, it depends on the mesh only weakly so we can consider $\sigma_h$ to be a constant. Therefore, we can use the left-hand side of ($\ref{IN}$) as the functional for the equidistribution condition.
Noticing that $\det(F_K')=|K|$ we thus have
\begin{equation}
\label{Ieq}
I_{eq}(\mathcal{T}_h)=d^{\frac{dp}{2}}\sum_{K\in\mathcal{T}_h} |K|\det(\mathbb{M}_K)^{\frac{1}{2}}\left(\det(F_K')^{-1}\det(\mathbb{M}_K)^{-\frac{1}{2}}\right)^p.
\end{equation}

We now consider the alignment condition ($\ref{ali}$). Recall that its left- and right-hand sides are
the arithmetic and geometric mean of the eigenvalues of the matrix $(F_K')^{-1}\M_K^{-1}(F_K')^{-T}$, respectively.
By the arithmetic-mean geometric-mean inequality, we have 
\begin{equation}
\label{IN2}
\frac{1}{d}\tr\left((F_K')^{-1}\M_K^{-1}(F_K')^{-T}\right)\ge \det\left((F_K')^{-1}\M_K^{-1}(F_K')^{-T}\right)^{\frac{1}{d}},\end{equation}
with equality if and only if all of the eigenvalues are equal. From this, we have
\[
\left(\tr\left((F_K')^{-1}\M_K^{-1}(F_K')^{-T}\right)\right)^{\frac{dp}{2}}\ge d^{\frac{dp}{2}} \left(\det(F_K')^{-1}
\det(\M_K)^{-\frac12}\right)^p
\]
and
\begin{align*}
& \sum_{K\in\mathcal{T}_h}|K|\det(\M_K)^{\frac{1}{2}}\left(\tr\left((F_K')^{-1}\M_K^{-1}(F_K')^{-T}\right)\right)^{\frac{dp}{2}}
\\
& \qquad \qquad \ge \sum_{K\in\mathcal{T}_h}|K|\det(\M_K)^{\frac{1}{2}}d^{\frac{dp}{2}}
\left(\det(F_K')^{-1} \det(\M_K)^{-\frac 1 2}\right)^p,
\end{align*}
where $p>0$. Minimizing the difference of the left- and right-hand sides makes the mesh tend to satisfying the alignment condition. Therefore, we can define our alignment functional as
\begin{equation}
\label{Iali}
 I_{ali}(\mathcal{T}_h)=\sum_{K\in\mathcal{T}_h}|K|\det(\mathbb{M}_K)^{\frac{1}{2}}
 \left[\tr\left((F_K')^{-1}\mathbb{M}_K^{-1}(F_K')^{-T}\right)^{\frac{dp}{2}}
 -d^{\frac{dp}{2}}\left(\frac{1}{\det(F_K')\det(\mathbb{M}_K)^{\frac{1}{2}}}\right)^p\right].
\end{equation}

We now have two functionals and want to obtain a mesh that tries to minimize both.
One way to ensure this is to combine the two functionals into a single one.
For example, we can average the equidistribution functional ($\ref{Ieq}$)
and the alignment functional ($\ref{Iali}$) with a dimensionless parameter $\theta\in (0,1)$, i.e.,
\begin{align}
\nonumber I_h(\mathcal{T}_h)= &~\theta I_{ali}(\mathcal{T}_h)+(1-\theta)I_{eq}(\mathcal{T}_h)\\
=& \nonumber~\theta\sum_{K\in\mathcal{T}_h}|K|\det(\mathbb{M}_K)^{\frac{1}{2}}
\left(\tr\left((F_K')^{-1}\mathbb{M}_K^{-1}(F_K')^{-T}\right)\right)^{\frac{dp}{2}}\\
\label{Huang1}
&\qquad +(1-2\theta)d^{\frac{dp}{2}}\sum_{K\in\mathcal{T}_h}|K|\det(\mathbb{M}_K)^{\frac{1}{2}}\left(\det(F_K')^{-1}\det(\mathbb{M}_K)^{-\frac 12}\right)^p.
\end{align}
This functional was first proposed in \cite{H} in the continuous form.  As one can notice, the equidistribution
and alignment conditions are balanced in equation ($\ref{Huang1}$) by the dimensionless parameter $\theta$,
for which full alignment is achieved when $\theta=1$ and full equidistribution is achieved when $\theta=0$.
For $0<\theta\le\frac{1}{2}$, $dp\ge 2$, and $p\ge 1$, the functional is coercive and polyconvex
and thus has a minimizer \cite{HR}. It has been shown in \cite{HK} that the MMPDE mesh equation (cf. Sect. 3)
associated with this functional has a mesh trajectory that stays nonsingular for all time
and has element volumes and altitudes bounded away from zero.
The functional has also been successfully used for many problems.
 
\subsection{The new functional}
The existing functional contains two parameters which have large disadvantages, and particularly it is still unclear how to choose an optimal $\theta$. Ideally we would like to take $\theta=1/2$ to ensure (\ref{Huang1}) is convex, but, unfortunately, previous numerical experiments show that this choice of $\theta$ does not put enough emphasis
on the equidistribution condition which controls the mesh concentration.
Moreover, larger values of $\theta$ emphasize the alignment condition which produces a more regular mesh. However, this regularity can also be achieved by choosing larger values of $p$ \cite{HR}.
This relation between $\theta$ and $p$ is not very clear. It has been known experimentally
that $\theta=1/3$ and $p = 3/2$ work well for many problems.
Here, we consider a new functional that eliminates the additional parameter $\theta$.
To this end, we first notice that ($\ref{equ}$) and ($\ref{ali}$) can be cast in a single condition.
Indeed, taking the determinant of both sides of ($\ref{ali2}$), we get
$$
\theta_K^d=\text{det}((F_K')^{-T}\mathbb{M}_K^{-1} F_K'^{-1})
=\text{det}(F_K')^{-2}\text{det}(\mathbb{M}_K)^{-1}=|K|^{-2}\det(\mathbb{M}_K)^{-1},
$$
which gives
$$
|K|\det(\mathbb{M}_K)^{\frac{1}{2}}=\theta_K^{-\frac d 2}.
$$
Comparing this to the equidistribution condition ($\ref{equ}$) we get 
$$
\theta_K=\left(\frac{\sigma_h}{N}\right)^{-\frac 2 d}.
$$
Thus, we obtain a single condition
$$
\left(F_K'\right)^{-T}\mathbb{M}_K^{-1}\left(F_K'\right)^{-1}=\left(\frac{\sigma_h}{N}\right)^{-\frac 2 d}I,
~~~\forall K\in \mathcal{T}_h
$$
which directly combines the equidistribution and alignment conditions. From this, we can define a new functional as
\begin{equation} \label{Huang2}
I_h=\sum_{K\in\mathcal{T}_h}|K|\det(\mathbb{M}_K)^{\frac{1}{2}}\left\|(F_K')^{-1}\mathbb{M}_K^{-1}(F_K')^{-T}-\left(\frac{\sigma_h}{N}\right)^{-\frac 2 d}I\right\|_{F}^{2p},
\end{equation}
where $\sigma_h$ is given in ($\ref{sigmah}$) and $\|\cdot\|_F$ is the Frobenius norm for matrices.  Generally speaking, since we are working with $d\times d$ matrices, we can use any matrix norm and produce an equivalent form of the functional.  We choose the Frobenius norm because it is convenient to compute.  We remark that the weight, $|K|\det(\mathbb{M}_K)^{\frac{1}{2}}$, is chosen so that ($\ref{Huang2}$) is more comparable to ($\ref{Huang1}$) which includes the energy functional of a harmonic mapping as a special example. Furthermore, this weight factor is used to emphasize the region where $\det(\M)$ (error density) is large.

Minimizing ($\ref{Huang2}$) will then ensure that the mesh satisfies both the equidistribution and alignment conditions
as closely as possible.



Notice that this functional only contains one parameter, $p$.  In Sect.~\ref{SEC:theory},
it will be proven that this new functional has similar theoretical properties as the existing functional.

\section{The moving mesh PDE solution strategy}
\label{SEC:mmpde}

In principle, we can directly minimize the two functionals ($\ref{Huang1}$) and ($\ref{Huang2}$)
given in the last section, however, this direct minimization problem is too difficult due to their extreme nonlinearity.
Instead, we will employ the moving mesh PDE (MMPDE) method \cite{HR} to find the minimizer.
To be specific, we define the mesh equation as a modified gradient system of $I_h$, i.e.,
\begin{equation}\label{MMPDEx-0}
\dfrac{d\V x_i}{dt}=-\dfrac{P_i}{\tau}\left(\dfrac{\partial I_h}{\partial \V x_i}\right)^T,~~~i=1,\dots, N_v
\end{equation}
where ${\partial I_h}/{\partial \V x_i}$ is considered as a row vector, $P_i$ is a positive scalar function used
to make the equation have invariance properties, and $\tau>0$ is a constant parameter used to adjust the time scale
of mesh movement.  It is interesting to notice that integrating ($\ref{MMPDEx-0}$) is equivalent to solving the minimization problem using the fastest descent method. The analytical formulation of the gradient
${\partial I_h}/{\partial \V x_i}$ has been obtained by Huang and Kamenski \cite{HK2}
for functionals in a general form
\[
I_h=\sum_{K\in\mathcal{T}_h}|K|G\left(\left(F_K'\right)^{-1},\det\left(F_K'\right)^{-1}, \M_K\right),
\]
where $G=G(\mJ,\det(\mJ),\M)$ is a smooth function of three arguments.
Using the formulation, we can rewrite the mesh equation in a compact form as
\begin{equation}
\dfrac{d \V x_i}{dt}=\dfrac{P_i}{\tau}\sum_{K\in \omega_i}|K|\V v_{i_K}^K,\quad i = 1, ..., N_v
\label{MMPDEx}
\end{equation} 
where $\omega_i$ is the patch of elements having $\V x_i$ as one of their vertices and $i_K$ and $\V v_{i}^K$ are the local index and velocity of $\V x_i$ on $K$, respectively. The local velocities are given by
\begin{align*}
\begin{bmatrix}
         (\V v_1^K)^T \\
         \vdots \\
         (\V v_d^K)^T
        \end{bmatrix}
&= -G E_K^{-1}+E_K^{-1}\frac{\partial G}{\partial \mathbb{J}} \hat{E} E_K^{-1}+\frac{\partial G}{\partial \det(\mathbb{J})}\frac{\det(\hat{E})}{\det(E_K)}E_K^{-1}\\
& \qquad -\frac{1}{d+1}\sum_{j=0}^d\text{tr}\left(\frac{\partial G}{\partial \mathbb{M}}\mathbb{M}_{j,K}\right)\begin{bmatrix}
         \frac{\partial \phi_{j,K}}{\partial \V x} \\
         \vdots \\
          \frac{\partial \phi_{j,K}}{\partial \V x}
        \end{bmatrix} ,
\end{align*}
$$
(\V v_0^K)^T=-\sum_{k=1}^d(\V v_k^K)^T-\sum_{j=0}^d\text{tr}\left(\frac{\partial G}{\partial \mathbb{M}}\mathbb{M}_{j,K}\right)\frac{\partial \phi_{j,K}}{\partial \V x},
$$
where $\mathbb{M}_{j,K}=\mathbb{M}(\V x_j^K)$, $\phi_{j,K}$ is a linear basis function associated
with $\V x_j^K$, $\frac{\partial \phi_{j, K}}{\partial \V x}$ is the gradient of $\phi_{j,K}$ as a row vector,
and $E_K$ and $\hat{E}$ are the edge matrices defined as
\[
E_K=[\V x_1^K-\V x_0^K,....,\V x_d^K-\V x_0^K], \quad \hat{E}=[\V \xi_1-\V \xi_0,....,\V \xi_d-\V \xi_0].
\]
Thus, in order to calculate the above velocities, we need 
$$
G, \hspace{.4cm} \frac{\partial G}{\partial \mathbb{J}}, \hspace{.4cm} \frac{\partial G}{\partial \det(\mathbb{J})}, \hspace{.4cm} \frac{\partial G}{\partial \mathbb{M}},
$$
where the derivatives are scalar-by-matrix derivatives as shown in \cite{HK2} and
$$
\mathbb{J}=(F_K')^{-1}=\hat{E}E^{-1}_K, \quad~~ \det(\mathbb{J})=\det(F_K')^{-1}=\frac{\det(\hat{E})}{\det(E_K)}, \quad~~ \V{x}=\V{x}_K, \quad~~ \M=\M_K .
$$

For the existing functional ($\ref{Huang1}$), we have
\begin{align}
\nonumber G\left(\mJ , \det(\mJ ),\mathbb{M}\right)
=& ~\theta\det(\mathbb{M})^{\frac{1}{2}}\left(\tr(\mJ\mathbb{M}^{-1}\mJ^{T})\right)^{\frac{dp}{2}}
+(1-2\theta)d^{\frac{dp}{2}}\det(\mathbb{M})^{\frac{1}{2}}\left(\det(\mJ)\det(\mathbb{M})^{-\frac12}\right)^p.
\label{Huang1G} 
\end{align}
The derivatives of $G$ in this case are given by
\[
\begin{cases}
        \dfrac{\partial G}{\partial \mathbb{J} }
        &=\; dp\theta\sqrt{\det(\mathbb{M})}\left(\tr(\mJ\mathbb{M}^{-1}\mJ^{T})\right)^{\frac{dp}{2}-1}\mathbb{M}^{-1}\mJ^{T}, \\
      \dfrac{\partial G}{\partial \det(\mJ)}&=\;  p(1-2\theta)d^{\frac{dp}{2}}\det(\mathbb{M})^{\frac{1-p}{2}}\det(\mJ)^{p-1},\\
      \dfrac{\partial G}{\partial \mathbb{M}}&=\; -\frac{\theta dp}{2}\sqrt{\det(\mathbb{M})}\left(\tr(\mJ\mathbb{M}^{-1}\mJ^{T})\right)^{\frac{dp}{2}-1}\mathbb{M}^{-1}\mJ^{T}\mJ\mathbb{M}^{-1}\\
      &\qquad +\; \frac{\theta}{2}\sqrt{\det(\mathbb{M})}\left(\tr(\mJ\mathbb{M}^{-1}\mJ^{T})\right)^{\frac{dp}{2}}\mathbb{M}^{-1}\\
     &\qquad +\; \frac{(1-2\theta)(1-p)d^{\frac{dp}{2}}}{2}\sqrt{\det(\mathbb{M})}
     \left(\dfrac{\det(\mJ)}{\sqrt{\det(\mathbb{M}}}\right)^p\mathbb{M}^{-1}.
\end{cases}
\]
For the new functional ($\ref{Huang2}$), we have
\beq \label{Huang2G}
G\left(\mJ,\det(\mJ),\mathbb{M}\right)=\sqrt{\det(\mathbb{M})}\left\|\mJ\mathbb{M}^{-1}\mJ^{T}-\left(\dfrac{\sigma_h}{N}\right)^{-\frac 2d}I\right\|_{F}^{2p}.
\eeq
The derivatives of $G$ for this functional are  
\[   
\begin{cases}
      \dfrac{\partial G}{\partial \mathbb{J}}
      &=\; 4 p \left\|\mJ\mathbb{M}^{-1}\mJ^{T} -\left(\dfrac{\sigma_h}{N}\right)^{-\frac 2d}I\right\|_{F}^{2(p-1)} \sqrt{\det(\mathbb{M})}~\mathbb{M}^{-1}\mJ^{T}\left(\mJ\mathbb{M}^{-1}\mJ^{T}
      -\left(\dfrac{\sigma_h}{N}\right)^{-\frac 2d}I\right), \\
            \dfrac{\partial G}{\partial \det(\mJ)}& = \; 0,\\
      \dfrac{\partial G}{\partial \mathbb{M}}&=\; \dfrac{1}{2}G\M^{-1}-\dfrac{1}{2}\dfrac{\partial G}{\partial \mathbb{J}}\mJ\M^{-1}.
\end{cases}
\]
Note that in the above derivation, we have viewed $\sigma_h$ as a constant since $\sigma_h\sim \int_{\Omega}\det(\M)^{\frac{1}{2}}d\V x.$

It is remarked that the mesh equation (\ref{MMPDEx}) needs to be modified for boundary vertices. For example,
we need to set the velocity to zero for corner vertices. For other boundary vertices, the velocity
should be modified so that they only slide along the boundary.
With appropriate modifications for boundary vertices and for a given metric tensor $\M$, (\ref{MMPDEx})
can be integrated for an adaptive mesh. We use Matlab's {\em ode15s} (a variable-order ODE solver
based on the numerical differentiation formulas) in our computation.

\section{Theoretical analysis of the new functional}
\label{SEC:theory}

In this section we study properties of the new functional ($\ref{Huang2}$).
In particular, we are interested in the coercivity, which is known to be
key to showing the nonsingularity and convergence of the mesh trajectory \cite{HK1}. 
We also study the non-singularity of the mesh trajectory and prove the existence
of limit meshes as $t\to \infty$ for the semi-discrete MMPDE ($\ref{MMPDEx-0}$).

\subsection{Coercivity}
\label{SEC:Coercivity}

\begin{thm}
The new functional ($\ref{Huang2}$) with $p > 1$ is coercive, 
i.e., there exist positive constants $\alpha$ and $\beta$ such that the function $G$ defined in (\ref{Huang2G}) satisfies
\beq
\label{Coereq}
G\ge\alpha\left\|\mJ\right\|_{F}^{4p}-\beta.
\eeq
\label{coercivity}
\end{thm}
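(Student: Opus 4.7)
The plan is to use the bounds (\ref{M-1}) on $\mathbb{M}$ together with standard matrix--norm inequalities to compare the Frobenius norm of $\mathbb{J}\mathbb{M}^{-1}\mathbb{J}^T - cI$, where $c := (\sigma_h/N)^{-2/d}$, with $\|\mathbb{J}\|_F^2$. First observe that $c$ is a positive constant depending only on $N,\Omega,\underline{m},\overline{m}$: indeed, (\ref{M-1}) yields $\underline{m}^{d/2}|\Omega| \leq \sigma_h \leq \overline{m}^{d/2}|\Omega|$, so $c$ has uniform two-sided bounds. Since also $\sqrt{\det(\mathbb{M})} \geq \underline{m}^{d/2}$ by (\ref{M-1}), the problem reduces to proving a lower bound of the shape $\|\mathbb{J}\mathbb{M}^{-1}\mathbb{J}^T - cI\|_F \geq C_1 \|\mathbb{J}\|_F^2 - C_2$, which then yields (\ref{Coereq}) after raising to the $2p$ power.

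The core estimate is $\|\mathbb{J}\mathbb{M}^{-1}\mathbb{J}^T\|_F \geq (\sqrt{d}\,\overline{m})^{-1}\|\mathbb{J}\|_F^2$, which I would obtain in two steps. First, the Loewner bound $\mathbb{M}^{-1} \geq \overline{m}^{-1} I$ coming from (\ref{M-1}), together with the PSD trace inequality, gives
$$\mathrm{tr}(\mathbb{J}\mathbb{M}^{-1}\mathbb{J}^T) \geq \overline{m}^{-1}\,\mathrm{tr}(\mathbb{J}\mathbb{J}^T) = \overline{m}^{-1}\,\|\mathbb{J}\|_F^2 .$$
Second, since $A := \mathbb{J}\mathbb{M}^{-1}\mathbb{J}^T$ is symmetric positive semidefinite, Cauchy--Schwarz on its (nonnegative) eigenvalues yields $\mathrm{tr}(A) \leq \sqrt{d}\,\|A\|_F$. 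Combining these with the reverse triangle inequality $\|A - cI\|_F \geq \|A\|_F - c\sqrt{d}$ produces the desired sub-quadratic lower bound
$$\bigl\|\mathbb{J}\mathbb{M}^{-1}\mathbb{J}^T - cI\bigr\|_F \;\geq\; \frac{1}{\sqrt{d}\,\overline{m}}\|\mathbb{J}\|_F^2 \;-\; c\sqrt{d}.$$

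To conclude, I would split on the size of $\|\mathbb{J}\|_F^2$ relative to the threshold $2cd\,\overline{m}$. Above the threshold, the subtractive term is absorbed into half the leading one, and raising to the $2p$ power gives $G \geq \underline{m}^{d/2}(2\sqrt{d}\,\overline{m})^{-2p}\|\mathbb{J}\|_F^{4p}$; below the threshold, $\|\mathbb{J}\|_F^{4p}$ is uniformly bounded by $(2cd\,\overline{m})^{2p}$ and (\ref{Coereq}) follows trivially by enlarging the additive constant. Taking $\alpha = \underline{m}^{d/2}/(2\sqrt{d}\,\overline{m})^{2p}$ and $\beta = \alpha(2cd\,\overline{m})^{2p}$ then works in both regimes. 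The main subtlety is the norm comparison between $\mathbb{J}\mathbb{M}^{-1}\mathbb{J}^T$ and $\mathbb{J}\mathbb{J}^T$ combined with the Cauchy--Schwarz collapse of the spectrum, which costs a factor of $\sqrt{d}$; all remaining steps are routine algebra, and notably the hypothesis $p>1$ is not actively used here (any $p>0$ would suffice).
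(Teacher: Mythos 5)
Your proof is correct, and while it shares the paper's overall skeleton (bound $\sqrt{\det(\mathbb{M})}\ge\underline{m}^{d/2}$, apply the reverse triangle inequality to peel off the $cI$ term, and compare $\|\mathbb{J}\mathbb{M}^{-1}\mathbb{J}^T\|_F$ with $\|\mathbb{J}\|_F^2$), it differs from the paper in both key technical steps. For the norm comparison the paper passes through the spectral norm, using $\|A\|_2\le\|A\|_F\le\sqrt{d}\|A\|_2$ twice to get $\|\mathbb{J}\mathbb{M}^{-1}\mathbb{J}^T\|_F\ge\frac{1}{\overline{m}d}\|\mathbb{J}\|_F^2$, whereas your trace route $\|A\|_F\ge\mathrm{tr}(A)/\sqrt{d}\ge\frac{1}{\sqrt{d}\,\overline{m}}\|\mathbb{J}\|_F^2$ loses only one factor of $\sqrt{d}$ and so yields a sharper constant. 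For handling the exponent, the paper uses the convexity inequality $(x-y)^{2p}\ge 2^{1-2p}x^{2p}-y^{2p}$ (its ``H\"older's inequality'' step), which requires $2p\ge 1$; your case split on $\|\mathbb{J}\|_F^2$ against the threshold $2cd\,\overline{m}$ avoids that inequality entirely, which is why your argument genuinely works for every $p>0$ while the paper's manipulation needs $p\ge 1/2$ (both comfortably within the stated hypothesis $p>1$). Your additional remark that $\gamma_h=(\sigma_h/N)^{-2/d}$ is uniformly controlled via $\underline{m}^{d/2}|\Omega|\le\sigma_h\le\overline{m}^{d/2}|\Omega|$ is a small but worthwhile point of care that the paper leaves implicit, since it is what makes $\beta$ an honest constant. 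The trade-off is that the paper's single-line convexity estimate produces one clean closed-form pair $(\alpha,\beta)$ valid in all regimes without branching, which is the form later quoted in Corollary~\ref{boundcor}; your constants would work there just as well.
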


\begin{proof}
For notational simplicity, we denote $\gamma_h=\left(\frac{\sigma_h}{N}\right)^{-2/d}$. From the triangle inequality and H\"older's inequality, we have
\begin{align*}
\left\| \mJ\mathbb{M}^{-1}\mJ^{T}-\gamma_hI\right\|_F^{2p}&\ge\left(\left\| \mJ\mathbb{M}^{-1}\mJ^{T}\right\|_F-\|\gamma_hI\|_F\right)^{2p}\\
&\ge2^{1-2p}\left\| \mJ\mathbb{M}^{-1}\mJ^{T}\right\|_F^{2p}-\gamma_h^{2p}\|I\|_F^{2p}\\
&=2^{1-2p}\left\|\mJ\mathbb{M}^{-1}\mJ^{T}\right\|_F^{2p}-\left(\gamma_h^{2}d\right)^p.
\end{align*}
Notice that for a $d\times d$ matrix $A$, we know that $\|A\|_2\le \|A\|_F\le \sqrt{d}\|A\|_2$. With this, it follows
$$\|\mJ\M^{-1}\mJ\|_F\ge \|\mJ\M^{-1}\mJ\|_2\ge \frac{1}{\overline{m}}\|\mJ\mJ^T\|_2=\frac{1}{\overline{m}}\|\mJ\|_2^2\ge \frac{1}{\overline{m}d}\|\mJ\|_F^2.
$$
Combining the above results, we get
$$G\ge \underline{m}^{\frac{d}{2}}\|\mJ\M^{-1}\mJ-\gamma_hI\|_F^{2p}\ge\dfrac{2^{1-2p}\underline{m}^{\frac{d}{2}}}{\overline{m}^{2p}d^{2p}}\|\mJ\|_F^{4p}-\underline{m}^{\frac{d}{2}}(\gamma_h^2d)^p.$$
Thus, $G$ satisfies ($\ref{Coereq}$) with $\alpha=\frac{2^{1-2p}\underline{m}^{\frac{d}{2}}}{\overline{m}^{2p}d^{2p}}$ and $\beta=\underline{m}^{\frac{d}{2}}(\gamma_h^2d)^p$.
\end{proof}

Thus the new functional is coercive. Unfortunately, it is not convex. As a consequence, there is no guarantee
that the minimizer of $I_h$ is unique. It does, however, have other important properties that are discussed
in detail next.

\subsection{Nonsingularity of the mesh trajectory}

Consider the semi-discrete MMPDE (\ref{MMPDEx-0}) with the new functional (\ref{Huang2}).
For a given metric tensor $\M$, which is independent of $t$ and satisfies (\ref{M-1}), the MMPDE
will generate a mesh trajectory $\mathcal{T}_h(t), \, t > 0$ for any given nonsingular initial mesh.
We denote the minimum altitude of $K$ in the metric $\mathbb{M}_K$ by $a_{K,\mathbb{M}}$.

\begin{co}
\label{boundcor} 
For any $t > 0$, the elements of the mesh trajectory of the semi-discrete MMPDE (\ref{MMPDEx-0}) with
the new functional ($\ref{Huang2}$) satisfy
\begin{equation} \label{akm}
a_{K,\mathbb{M}}\ge C_1 \overline{m}^{-\frac{d}{2(4p-d)}}N^{-\frac{4p}{d(4p-d)}}, ~~~ \forall K\in\mathcal{T}_h(t) ,
\end{equation}
\begin{equation} \label{K}
|K|\ge C_2 \overline{m}^{-\frac{d^2}{2(4p-d)}-\frac{d}{2}}N^{-\frac{4p}{(4p-d)}},  ~~~ \forall K\in\mathcal{T}_h(t) ,
\end{equation}
where $C_1$ and $C_2$ are constants give by 
\begin{equation}
C_1=\left(\dfrac{2^{6p}~d!^{\frac{4p}{d}}~\alpha}{d^{4p}(d+1)^{4p-\frac{2p}{d}}\left(\beta|\Omega|+I_h\left(\mathcal{T}_h(0)\right)\right)}\right)^{\frac{1}{4p-d}},\quad C_2=\dfrac{C_1^d}{d!},
\end{equation}
and $\alpha$ and $\beta$ are defined in the proof of Theorem \ref{coercivity}.
Moreover, $\mathcal{T}_h(t)$ is nonsingular for all $t>0$ if it is nonsingular initially.
\end{co}

\begin{proof}
This is a consequence of Theorem 4.1 in \cite{HK1} which is stated for a general coercive functional.
A direct application of this theorem with $q=2p$ and Theorem~\ref{coercivity} in the previous subsection 
gives the desired result.
\end{proof}

The key components in the proof of Theorem 4.1 in \cite{HK1} are the coercivity of the functional and
the decreasing energy along the mesh trajectory of (\ref{MMPDEx-0}). The latter can be seen from
\[
\frac{d I_h}{d t} = \sum_{i} \frac{\partial I_h}{\partial \V{x}_i} \frac{d \V{x}_i}{d t}
= - \sum_{i} \frac{P_i}{\tau} \frac{\partial I_h}{\partial \V{x}_i} \left (\frac{\partial I_h}{\partial \V{x}_i}\right )^T
= - \sum_{i} \frac{P_i}{\tau} \left \| \frac{\partial I_h}{\partial \V{x}_i} \right \|^2 \le 0.
\]

The role of the parameter $p$ can be explained to some extent from the inequality (\ref{akm}).
Indeed, from (\ref{akm}) we have
\[
a_{K,\mathbb{M}}\ge C_1 \overline{m}^{-\frac{d}{2(4p-d)}}N^{-\frac{4p}{d(4p-d)}}\to C_1 N^{-\frac{1}{d}},
\quad p\to\infty.
\]
Thus, the mesh becomes more uniform as $p$ is getting larger.

One may notice that the bounds in (\ref{akm}) and (\ref{K}) depend on $N$ and $\overline{m}$. 
This is natural since the elements becomes smaller for larger $N$.
Moreover, from the equidistribution condition ($\ref{equ}$), we can see that $|K| \sim \det(\M_K)^{-\frac 1 2}$, thus we can expect the lower bounds for the altitudes and volumes of the elements to become
smaller as $\overline{m}$ gets larger.

Consider now the fully discrete case. Let $t_n$, $n=0,1,\dots$ denote the time levels
with $t_n\to\infty$ as $n\to \infty$. Assume that we have chosen a one-step integration scheme
for ($\ref{MMPDEx}$) such that the energy is decreasing, i.e.,
\begin{equation}
I_h(\mathcal{T}_h^{n+1})\le I_h(\mathcal{T}_h^n).
\label{energy-2}
\end{equation}
Many schemes such as Euler's and the backward Euler have this property with a sufficiently small
but not diminishing time step; e.g., see \cite{HL,HK2}. Then, Corollary~\ref{boundcor} will also holds
for the mesh sequence, $\mathcal{T}_h^n$, $n = 0, 1, ...$.

\subsection{Limits of the mesh trajectory}

A direct application of Theorem 4.3 in \cite{HK1}, which is stated for a general coercive functional
and Theorem~\ref{coercivity} in Section~\ref{SEC:Coercivity}, gives the following corollary.

\begin{co}
\label{co2} 
The mesh trajectory of the semi-discrete MMPDE (\ref{MMPDEx-0}) with
the new functional (\ref{Huang2}) has the following properties.
\begin{itemize}
\item[(a)] $I_h(\mathcal{T}_h(t))$ has a limit as $t\to \infty$, i.e.,
$$\lim_{t\to\infty}I_h(\mathcal{T}_h)=L.$$
\item[(b)] The mesh trajectory has limit meshes, all of which are non-singular and satisfy the bounds given
in Corollary~\ref{boundcor}.
\item[(c)] The limit meshes are critical points of $I_h$.
\end{itemize}
\end{co}

The result in Corollary~\ref{co2} ensures that as time increases, the values of the functional for the mesh trajectory
converge. This is significantly beneficial since it can be used as a computational stopping criteria.
It should be noted that in general, there is no guarantee the mesh trajectory converges. In order to guarantee this convergence, stronger requirements need to be placed on either the descent in the functional value or on the meshing functional; e.g., see the more detailed discussion in \cite{HK2}. Moreover, like Corollary~\ref{boundcor},
Corollary~\ref{co2} also holds for the fully discrete case provided that the time step is sufficiently small and
the scheme satisfies the energy decreasing condition (\ref{energy-2}).

To conclude this section, we note that the existing functional (\ref{Huang1}) is also coercive for
$p > 1$ and $\theta \in (0, 1/2]$. Thus, Corollaries~\ref{boundcor} and \ref{co2} apply to
the existing functional as well.

\section{Numerical examples}
\label{SEC:numerics}

Here we present numerical results for two examples in two dimensions
to demonstrate the theoretical findings discussed in Section~\ref{SEC:theory}.
Two of the main focuses will be showing the positive lower bound of the element volumes
and the monotonically decreasing energy functional. Additionally, we will provide and compare
meshes associated with the new and existing functionals.
In order to asses the quality of the generated meshes, we compare the linear interpolation error
($error$, measured in the $L^2$ norm), and the equidistribution ($Q_{eq}$),
alignment ($Q_{ali}$), and geometric ($Q_{geo}$) mesh quality measures which are defined as
\begin{equation}
Q_{eq}=\sqrt{\frac{1}{N}\sum_{K\in\mathcal{T}_c} Q_{eq,K}^2},
\quad Q_{ali}=\sqrt{\frac{1}{N}\sum_{K\in\mathcal{T}_h}Q_{ali,K}^2},
\quad Q_{geo}=\sqrt{\frac{1}{N}\sum_{K\in\mathcal{T}_h}Q_{geo,K}^2},
\end{equation}
where 
\begin{equation}
Q_{eq,K}=\dfrac{|K| \det(\mathbb{M}_K)^{\frac{1}{2}}}{\sigma_h/N},
\quad Q_{ali,K}=\dfrac{\tr\left((F_K')^T\mathbb{M}_K F_K'\right)}
{d\det\left((F_K')^T\mathbb{M}_K F_K'\right)^{\frac{1}{d}}},
\quad Q_{geo,K}=\dfrac{\tr\left((F_K')^T F_K'\right)}
{d\det\left((F_K')^T F_K'\right)^{\frac{1}{d}}} .
\end{equation}
The equidistribution and alignment measures are indications of how closely the mesh 
satisfies the equidistribution condition (\ref{equ}) and the alignment condition (\ref{ali}), respectively.
The closer these quality measures are to 1, the closer they are to a uniform mesh with respect to
the metric $\mathbb{M}$. The geometric measure is the same as the alignment quality measure
taking $\mathbb{M}=I$. It measures how skew the mesh is in the Euclidean metric.

We use $p=3/2$ and $\theta=1/3$ in the existing functional (\ref{Huang1}) and $p = 1$ in the new functional (\ref{Huang2}).  The defined parameters $p$ and $\theta$ for the existing functional are commonly used and known to work well for most problems. The choice for $p$ in the new functional is based on the desire to ensure that ($\ref{Huang2}$) is a quadratic function of matrix entries,  which, computationally, makes the MMPDE less difficult to solve.
The parameter $\tau$ in the MMPDE ($\ref{MMPDEx}$) is taken to be $\tau=10^{-2}$.
Additionally, for the positive function $P_i$ in ($\ref{MMPDEx}$) we use $P_i=\det(\M)^{\frac{p-1}{2}}$ for the existing functional and $P_i=\det(\M)^{\frac{2}{d}}$ for the new functional to ensure, for both cases,
that the MMPDE ($\ref{MMPDEx}$) is invariant under the scaling transformation of $\M$.
The two dimensional meshes for Example $\ref{Exam5.1}$ and Example $\ref{Exam5.2}$ are constructed on the domain $\Omega=(0,1)\times(0,1)$.  
We take the metric tensor as
\[
\mathbb{M}_K=\det(|H_K|)^{\frac{-1}{d+4}}|H_K|,
\]
where $H_K$ is the recovered Hessian using least squares fitting to the function values at the mesh vertices and $|H_K| = Q\text{diag}(|\lambda_1|, ..., |\lambda_d|)Q^T$, assuming that
$Q\text{diag}(\lambda_1, ..., \lambda_d)Q^T$ is the eigen-decomposition of $H_K$.
It is known \cite{HR} that the above form of the metric tensor is optimal corresponding to the $L^2$-norm
of linear interpolation on triangular meshes.

\begin{exam}
\label{Exam5.1}
In this example, we generate adaptive meshes for the sine wave modeled by
$$
u(x,y) = \tanh\left( -30 \left[y - 0.5 - 0.25\sin(2\pi x) \right] \right) .
$$
For the following results, we run to a final time of $5.0$.

The example meshes and close-ups are given in Fig.~\ref{fig:exam5.1-mesh}.  The mesh associated
with the new functional provides good shape and size adaptation. There is a high concentration of mesh elements
in regions with large curvature near the interface. This is consistent with the fact that the used metric tensor
is Hessian based. A closer look at the mesh shows that the elements are more skew (in the Euclidean metric)
in the places with larger curvature. This is also shown in Table~\ref{table-5.1} with $Q_{geo} \approx 2$.
On the other hand, $Q_{ali}$ is close to $1$, indicating that the mesh almost satisfies the alignment condition
under the metric $\M$. Therefore, the mesh may seem skew in the Euclidean metric but is very regular
in the metric $\M$.

\begin{figure}[htb]
\begin{center}
\hbox{
\begin{minipage}[t]{2.1in}
\begin{center}
\includegraphics[width=2.1in]{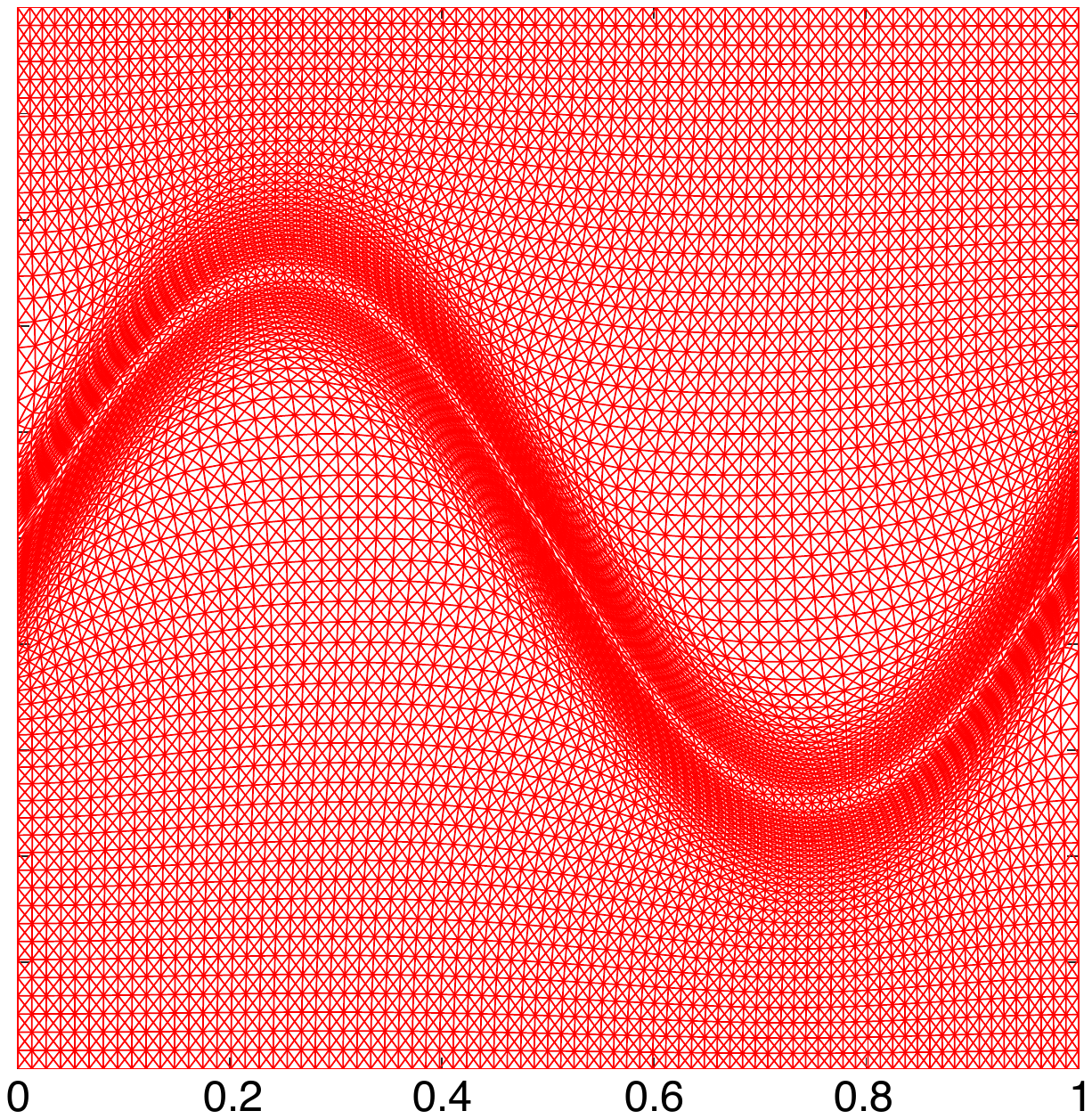}
\end{center}
\centerline{(a) New functional}\
\end{minipage}
\hspace{2mm}
\begin{minipage}[t]{2.1in}
\begin{center}
\includegraphics[width=2.1in]{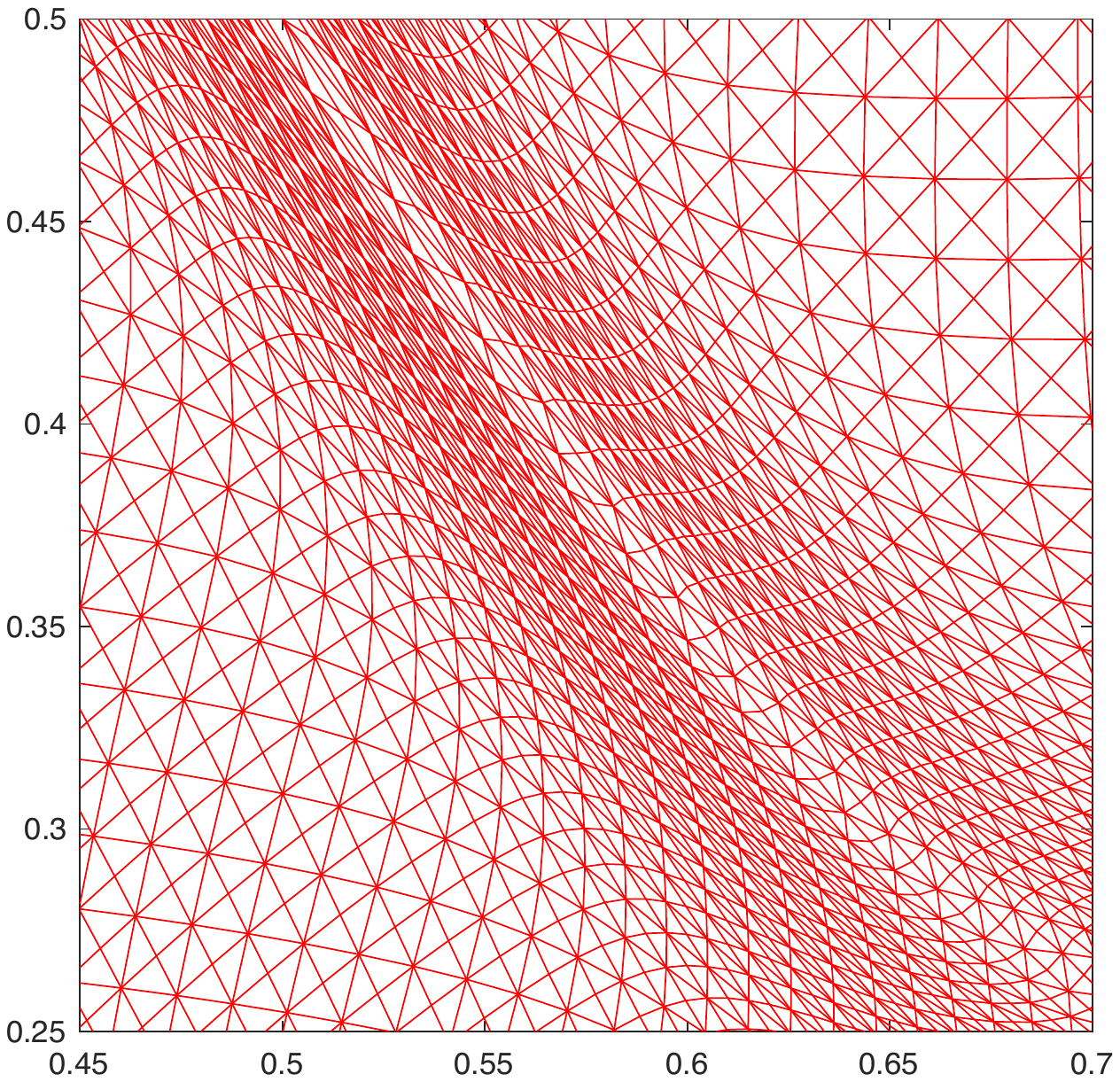}
\end{center}
\end{minipage}
\hspace{2mm}
\begin{minipage}[t]{2.1in}
\begin{center}
\includegraphics[width=2.1in]{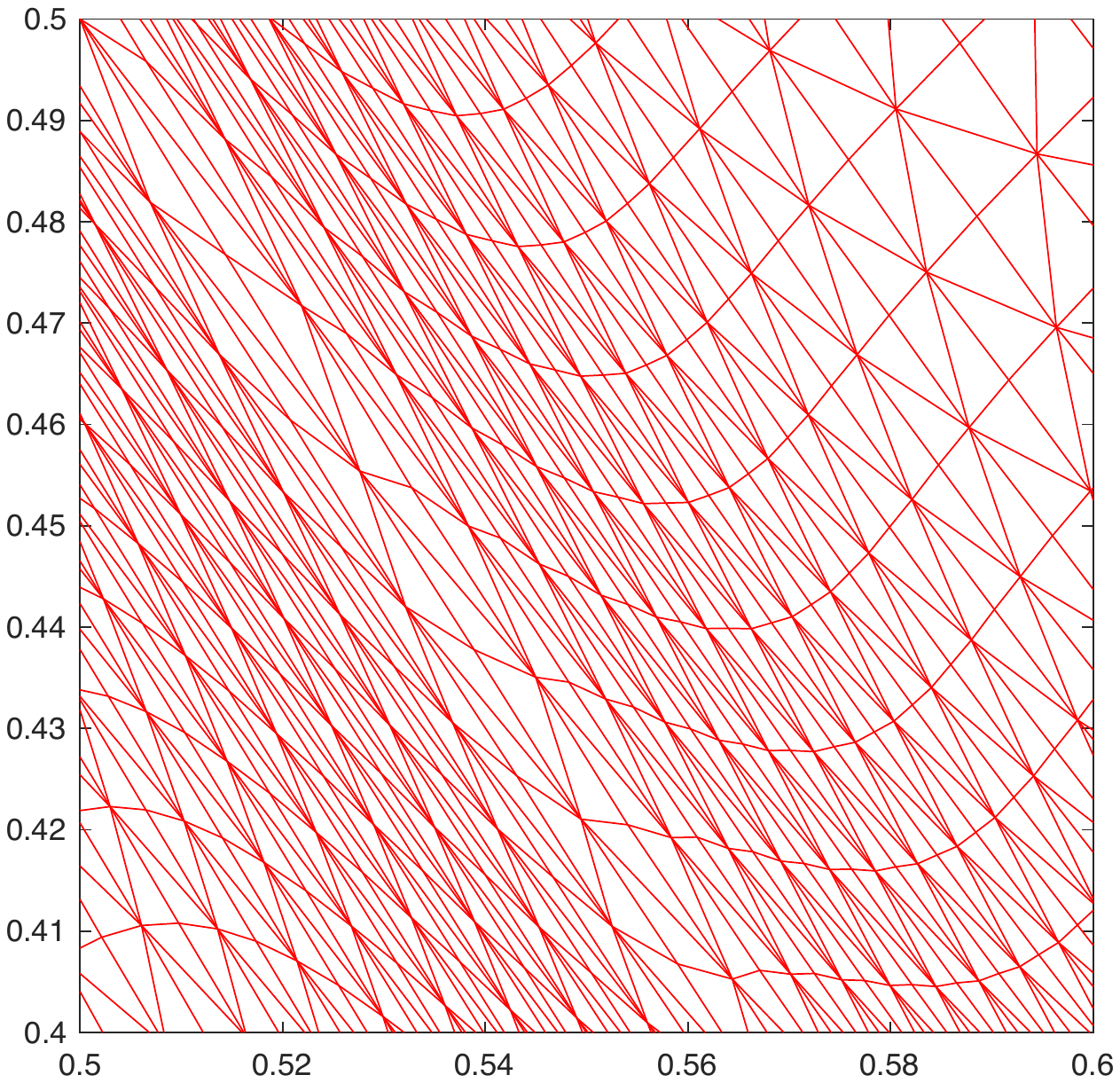}
\end{center}
\end{minipage}
}
\hbox{
\begin{minipage}[t]{2.1in}
\begin{center}
\includegraphics[width=2.1in]{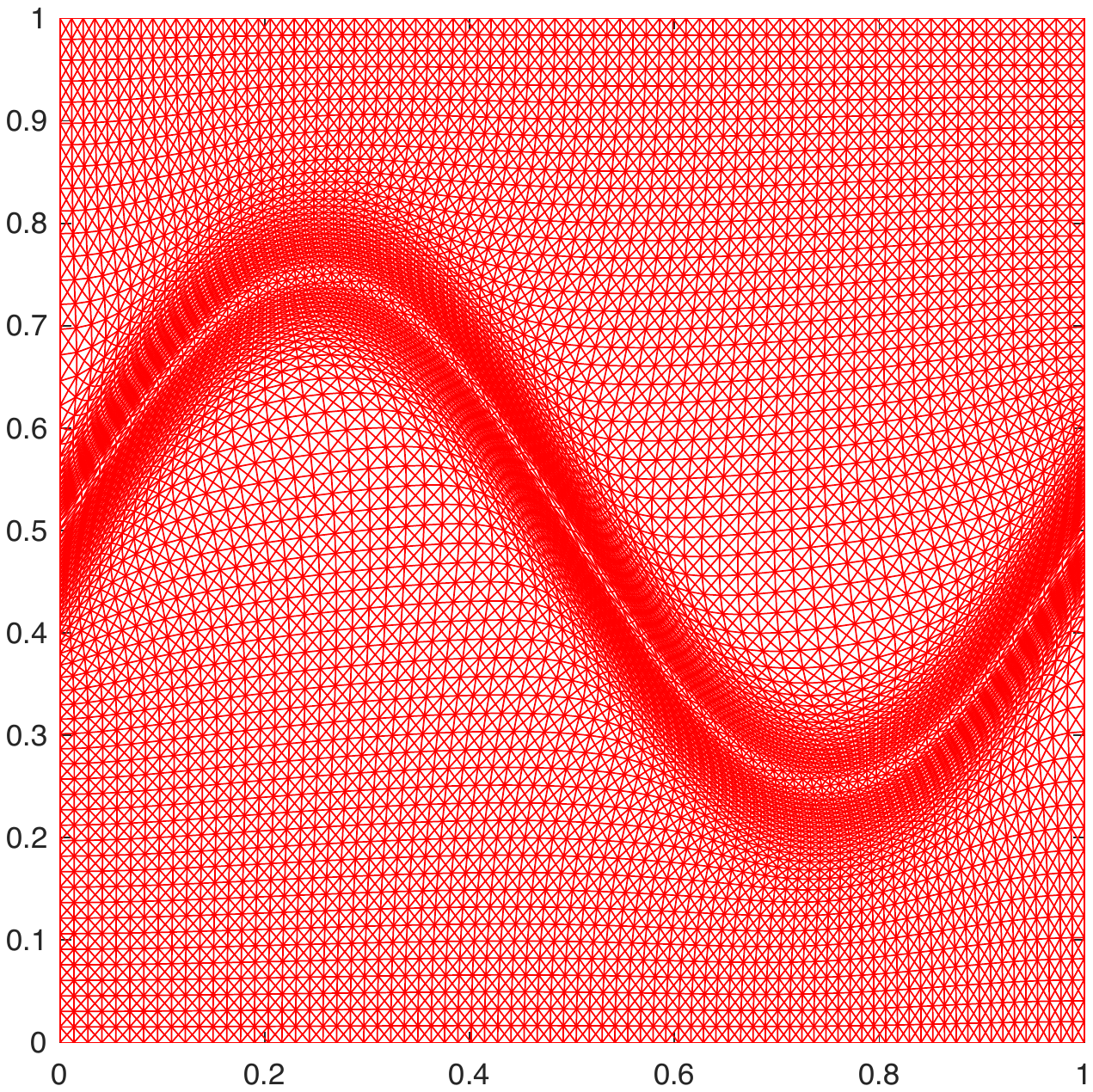}
\end{center}
\centerline{(b) Existing functional}\
\end{minipage}
\hspace{2mm}
\begin{minipage}[t]{2.1in}
\begin{center}
\includegraphics[width=2.1in]{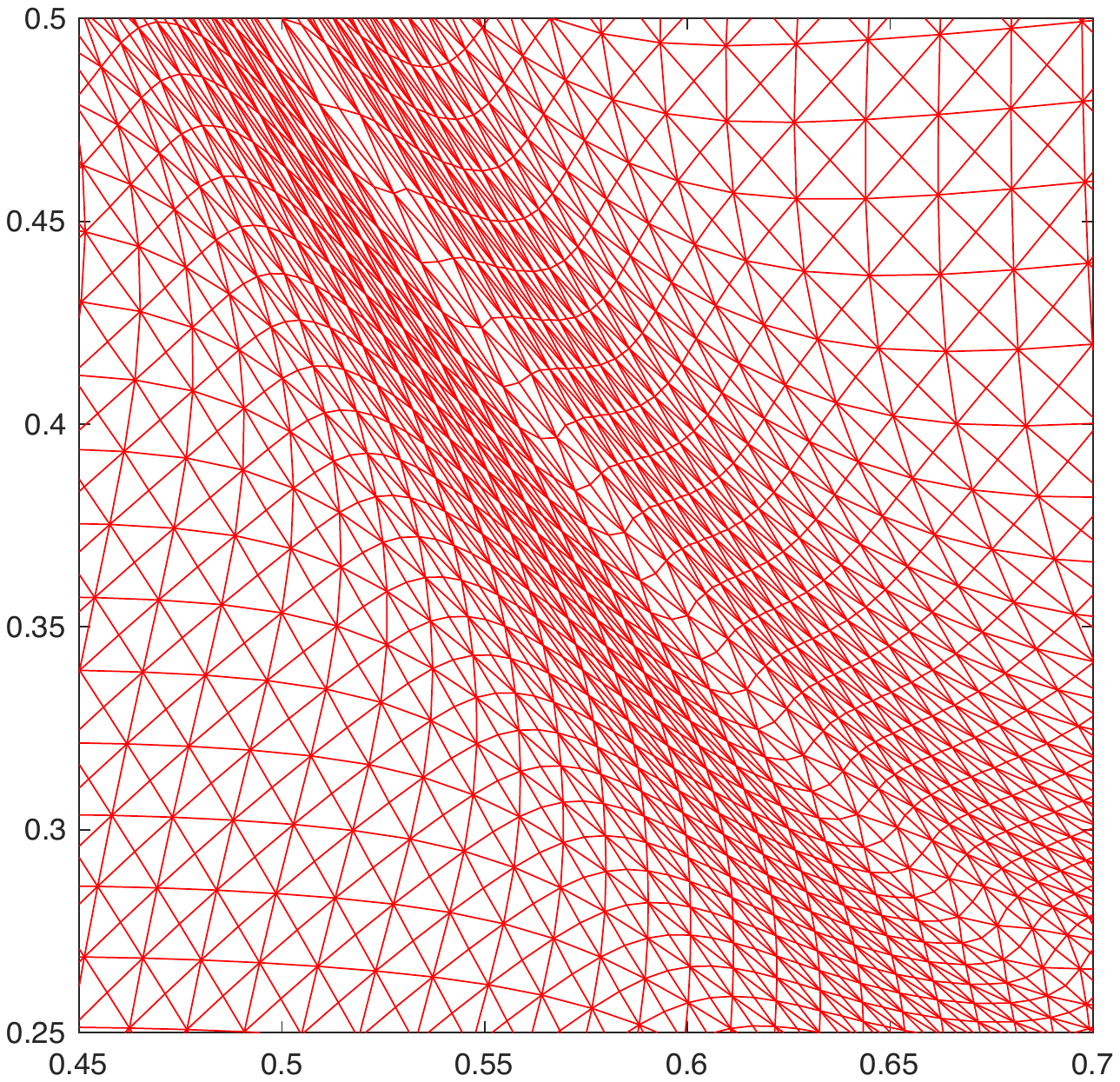}
\end{center}
\end{minipage}
\hspace{2mm}
\begin{minipage}[t]{2.1in}
\begin{center}
\includegraphics[width=2.1in]{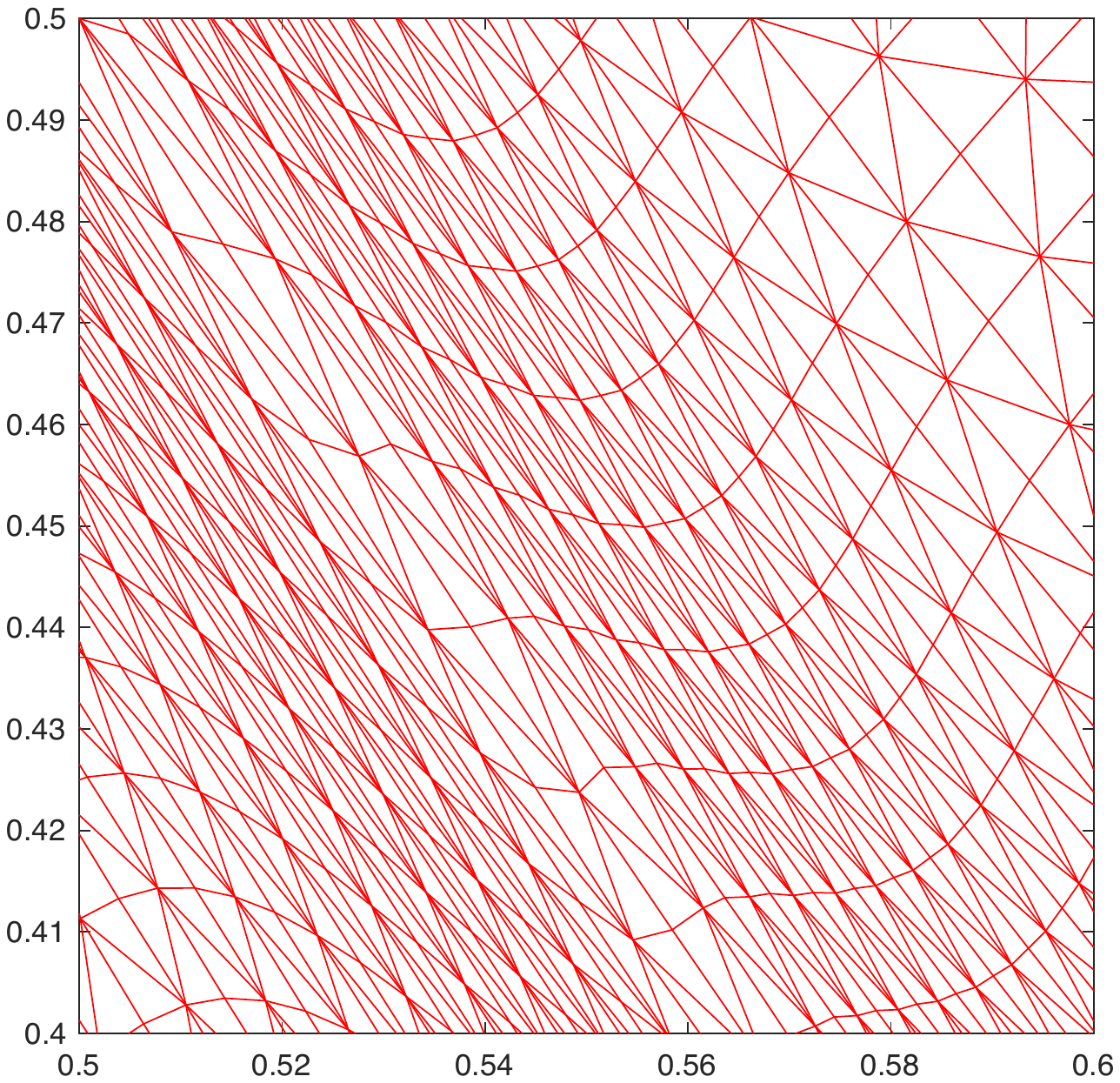}
\end{center}
\end{minipage}
}
\caption{Example~\ref{Exam5.1}. Example meshes (left), close-ups near the inflection point (middle),
and a closer version of the inflection point (right) with $N=25600$.}
\label{fig:exam5.1-mesh}
\end{center}
\end{figure}

\begin{table}[htb]
\caption{Mesh quality measures and the $L^2$ norm of linear interpolation error for Example~\ref{Exam5.1}.}
\begin{center}
\begin{tabular}{| c | c | c | c | c | c |} 
\hline
  Functional & N & $Q_{geo}$ & $Q_{eq}$ & $Q_{ali}$ & error \\
\hline
\multirow{3}{6em}{Existing} & 1600 & 1.684 & 1.065 & 1.041 & 5.563e-3\\ 
& 6400 & 2.000 & 1.071 & 1.042 & 1.219e-3\\
& 25600 & 1.986 & 1.081 & 1.039 & 3.038e-4\\ 
\hline
\multirow{3}{6em} {New} & 1600 &  1.593 & 1.088 & 1.028 & 6.077e-3\\
& 6400 & 1.896 & 1.094 & 1.030 & 1.305e-3 \\ 
& 25600 & 2.019 & 1.091 & 1.030 & 3.138e-4 \\ 
\hline
\end{tabular}
\label{table-5.1}
\end{center}
\end{table}

\begin{figure}[htb]
\begin{center}
\hbox{
\begin{minipage}[t]{2in}
\begin{center}
\begin{tikzpicture}
\node (img) {\includegraphics[width=2.25in]{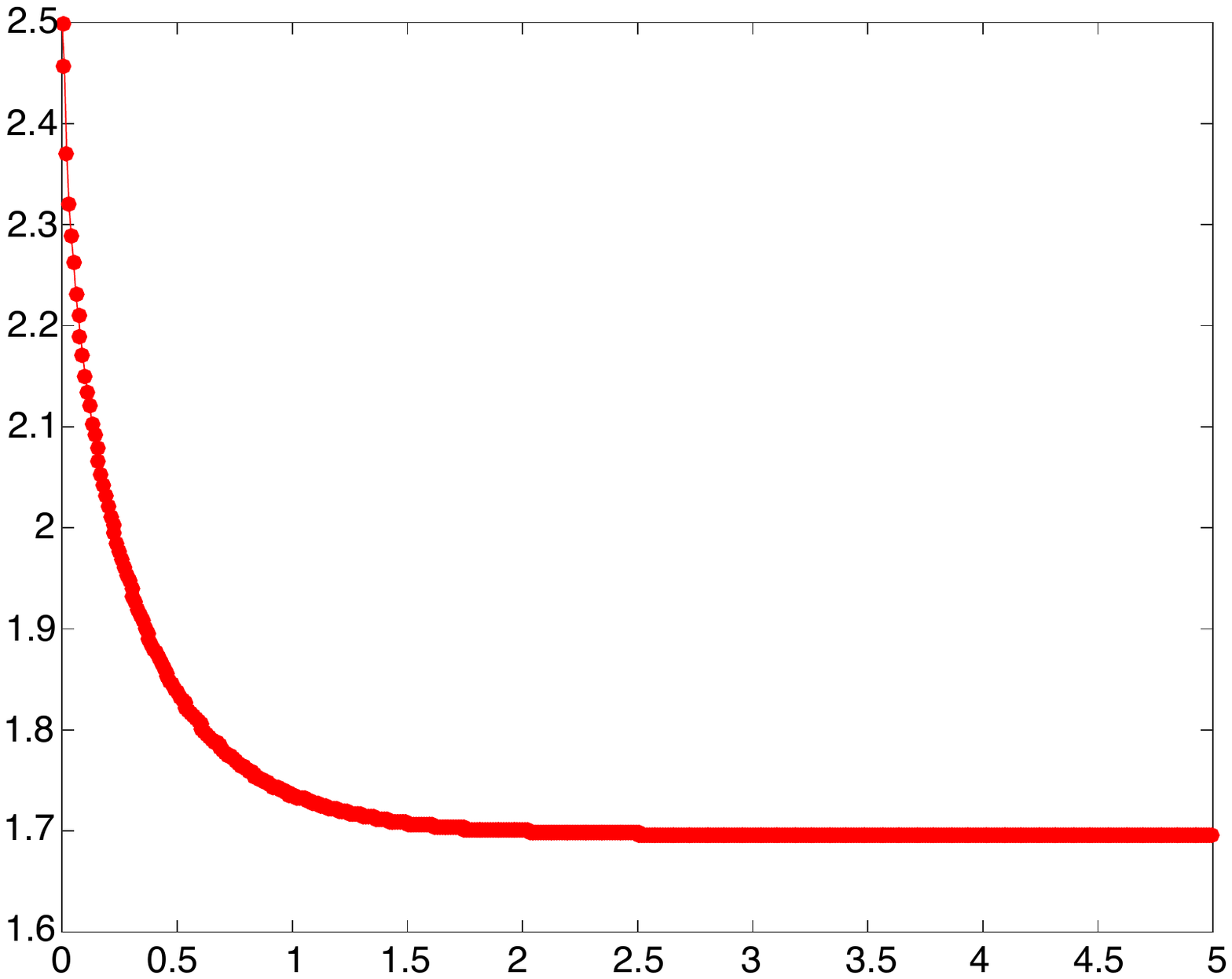}};
\node[below=of img, node distance=0cm, yshift=1cm] {t};
  \node[left=of img, node distance=0cm, rotate=90, anchor=center,yshift=-0.7cm] {$I_h$};
  \end{tikzpicture}
{(a) New functional $I_h$}
\end{center}
\end{minipage}
\hspace{30mm}
\begin{minipage}[t]{2in}
\begin{center}
\begin{tikzpicture}
\node (img) {\includegraphics[width=2.3in]{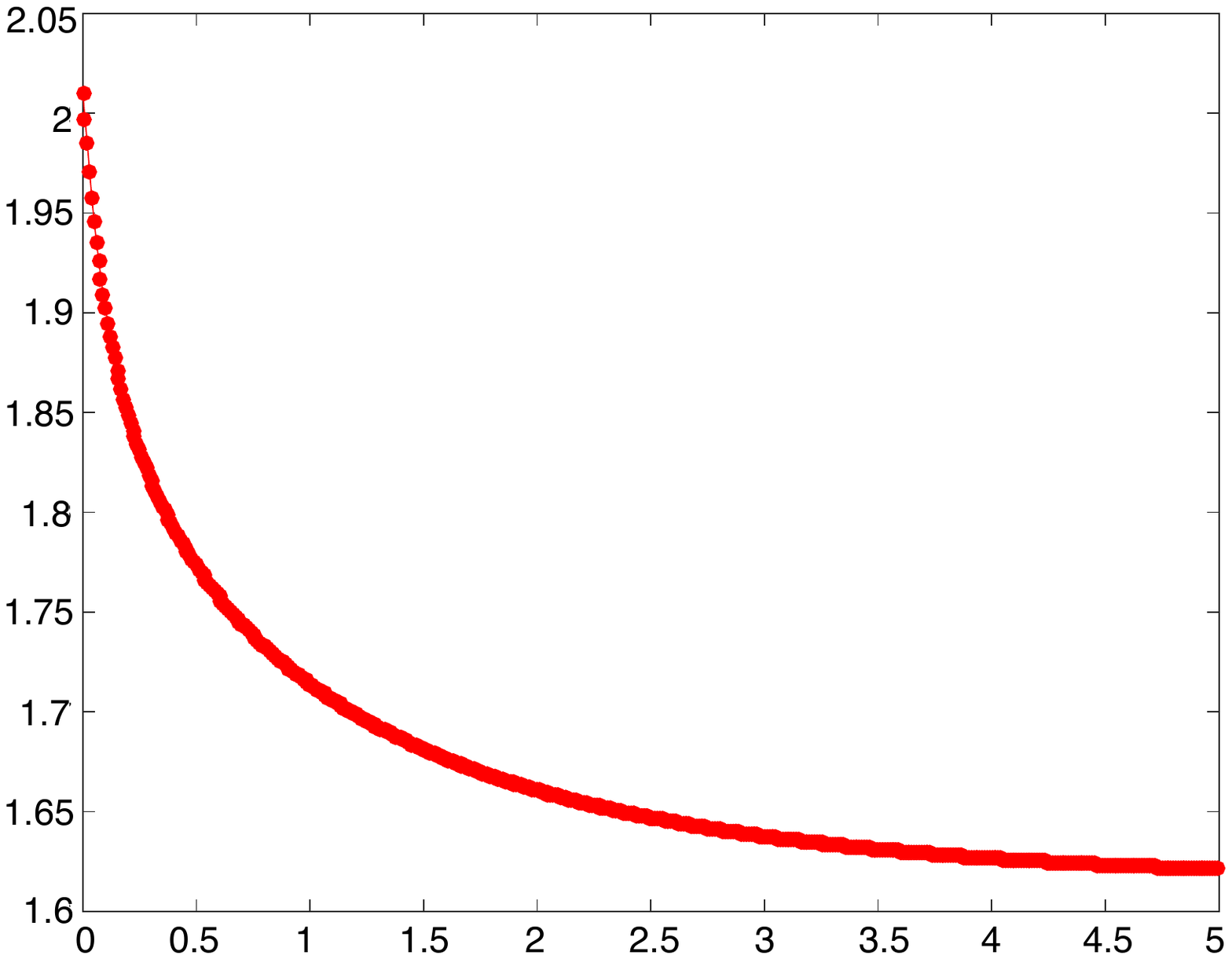}};
\node[below=of img, node distance=0cm, yshift=1cm] {t};
  \node[left=of img, node distance=0cm, rotate=90, anchor=center,yshift=-0.7cm] {$I_h$};
  \end{tikzpicture}
{(b) Existing functional $I_h$}
\end{center}
\end{minipage}
}
\vspace{10mm}
\hbox{
\begin{minipage}[t]{2in}
\begin{center}
\begin{tikzpicture}
\node (img) {\includegraphics[width=2.3in]{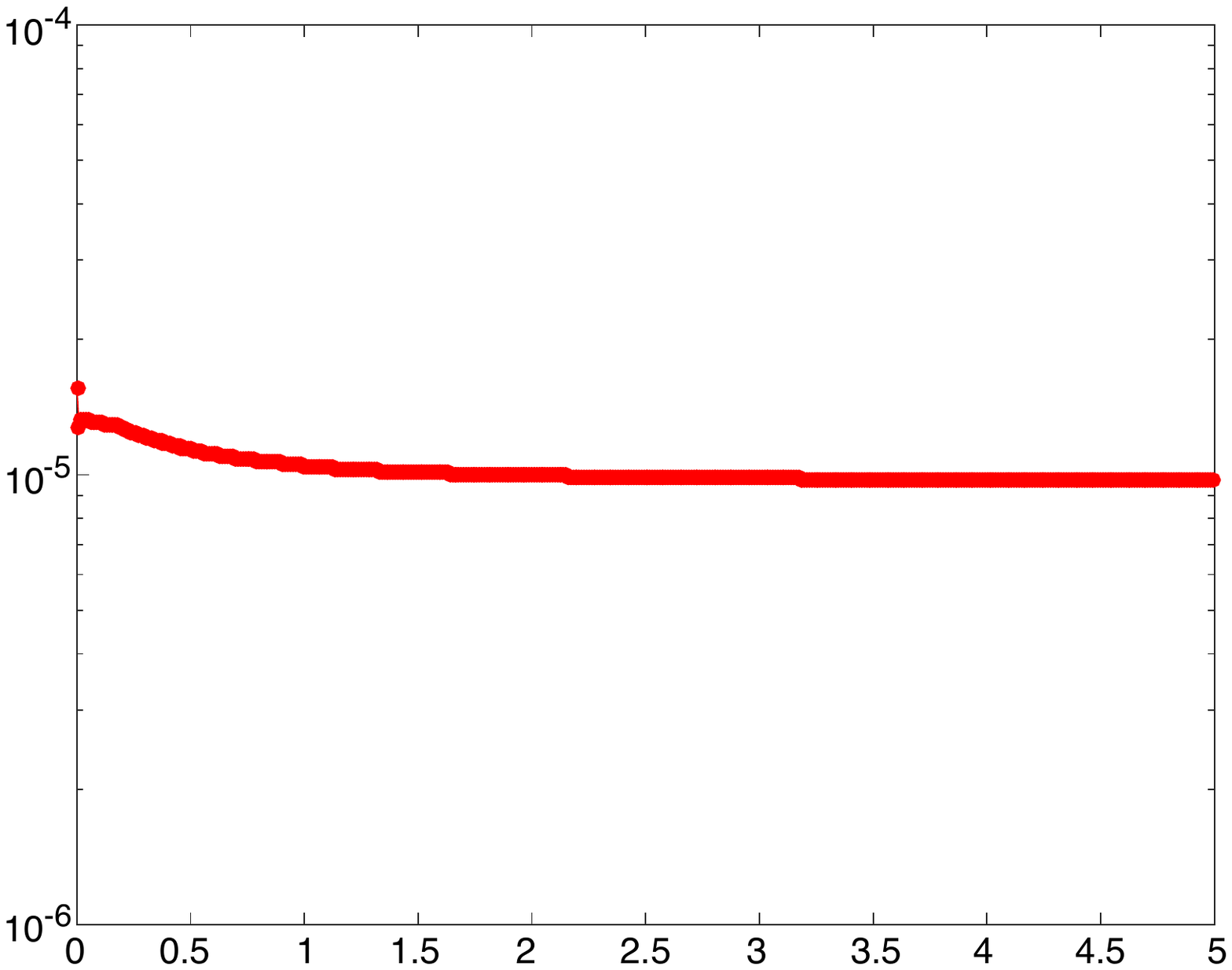}};
\node[below=of img, node distance=0cm, yshift=1cm] {t};
  \node[left=of img, node distance=0cm, rotate=90, anchor=center,yshift=-0.7cm] {$|K|_{\min}$};
  \end{tikzpicture}
{(c) New functional $|K|_{\min}$}
\end{center}
\end{minipage}
\hspace{30mm}
\begin{minipage}[t]{2in}
\begin{center}
\begin{tikzpicture}
\node (img) {\includegraphics[width=2.3in]{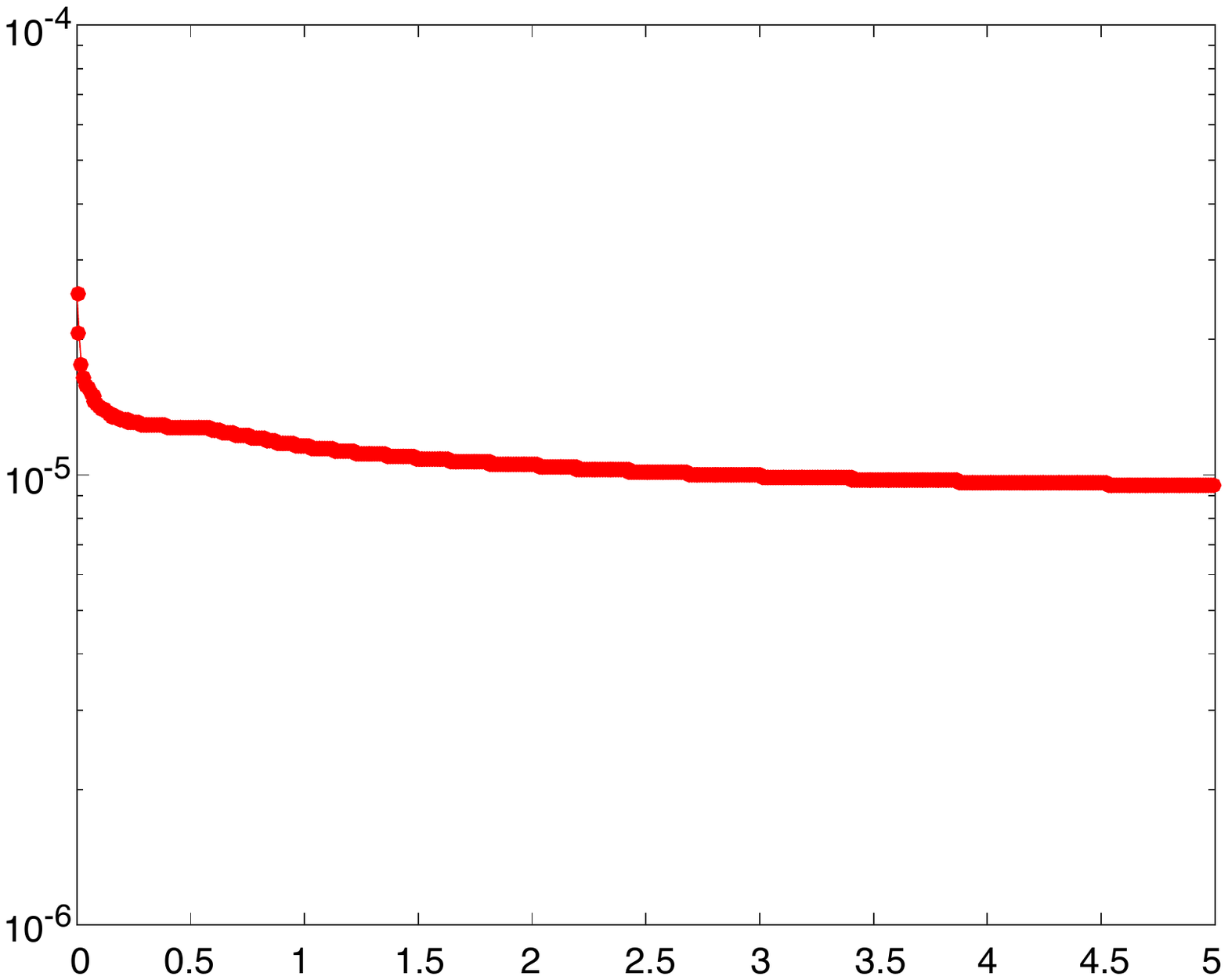}};
\node[below=of img, node distance=0cm, yshift=1cm] {t};
  \node[left=of img, node distance=0cm, rotate=90, anchor=center,yshift=-0.7cm] {$|K|_{\min}$};
  \end{tikzpicture}
{(d) Existing functional $|K|_{\min}$}
\end{center}
\end{minipage}
}
\caption{Example~\ref{Exam5.1}. The energy and minimum element volume are plotted as functions of $t$ with $N=25600$.}
\label{fig:exam5.1-energy}
\end{center}
\end{figure}

While studying Table~\ref{table-5.1}, we can also see that the $Q_{eq}$ quality measure for the new functional
is close to $1$, hence indicating that the mesh associated with the new functional is close to satisfying
the equidistribution condition with respect to $\M$. Therefore, with the alignment and equidistribution
conditions close to being satisfied we can conclude that the mesh is almost uniform under the metric $\M$.
The error value is a good indication that the mesh associated with the new functional is accurate.
In this example, the error associated with the new functional is reasonably low. Moreover, as $N$ increases, the numerical
results show that the error decreases like $\mathcal{O}(N^{-1})$, a second-order convergence rate
in terms of the average element diameter $\bar{h} = 1/\sqrt{N}$. This is consistent with the analysis
of linear interpolation on triangle meshes.

As discussed in Section~\ref{SEC:theory}, theoretically we know that the $I_h$ value is decreasing
and $|K|$ is bonded below.  To see these numerically, we plot $I_h$ and $|K|_{min}$ as functions of $t$
in Fig.~\ref{fig:exam5.1-energy}.  The results are consistent with the theoretical predictions.
Specifically, Fig.~\ref{fig:exam5.1-energy}(a) shows that $I_h$ is decreasing while
Fig.~\ref{fig:exam5.1-energy}(c) suggests that $|K|_{\min}$ is bounded below about $10^{-5}$. It is interesting to observe that Fig.~\ref{fig:exam5.1-energy}(a) shows $I_h$ decreasing faster at the beginning then leveling out more quickly when compared to the existing functional (Fig.~\ref{fig:exam5.1-energy}(b)). This shows that the energy is converging faster for the new functional than for the existing functional.

For comparison purpose, we also show the results obtained with the existing functional in Table~\ref{table-5.1}
and Figs.~\ref{fig:exam5.1-mesh} and \ref{fig:exam5.1-energy}. From these, we see a high correlation.
With respect to the mesh, both are very similar with high concentration near the interface where
the function has large curvature. The quality measures $Q_{geo}$, $Q_{eq}$, and $Q_{ali}$ are very
similar as well.  We further remark that the CPU time for both functionals are almost equivalent, differing at most
by a few seconds. Hence, we can see that the two functionals are very comparable and both seem to work well in
this example. To save space, we do not present here numerical results comparing
(\ref{Huang1}) and (\ref{Huang2}) with other meshing functionals.
The interested reader is referred to \cite{HKR} for additional numerical comparisons.

To show how both functionals perform in a more anisotropic example, we change the constant $30$
in Example \ref{Exam5.1} to $100$ and generate adaptive meshes. In this case, we run to a final time of 0.1.
Fig.~\ref{fig:exam5.1-100mesh} shows the meshes and close-ups. As we can see from studying the meshes, the new functional provides a more adaptive mesh around the region with large curvature. That is, there is a higher concentration of mesh elements that are skew with respect to the Euclidean norm in this region. This is further confirmed by Table~\ref{table-5.1-100} where we see $Q_{geo}\approx1.894$ for the new functional and $Q_{geo}\approx1.279$ for the existing functional ($N=25600$).  It is also observed from $Q_{eq}$ and $Q_{ali}$ in Table~$\ref{table-5.1-100}$ that the mesh associated with the new functional is slightly more uniform with respect to the metric tensor $\mathbb{M}$.
Moreover, the interpolation error for the new functional is about half that of the existing functional for $N=25600$. 
Overall, both functionals handle this more anisotropic example well and comparably.

\begin{figure}[htb]
\begin{center}
\hbox{
\begin{minipage}[t]{1.05in}
\mbox{ }
\end{minipage}
\begin{minipage}[t]{2.1in}
\begin{center}
\includegraphics[width=2.1in]{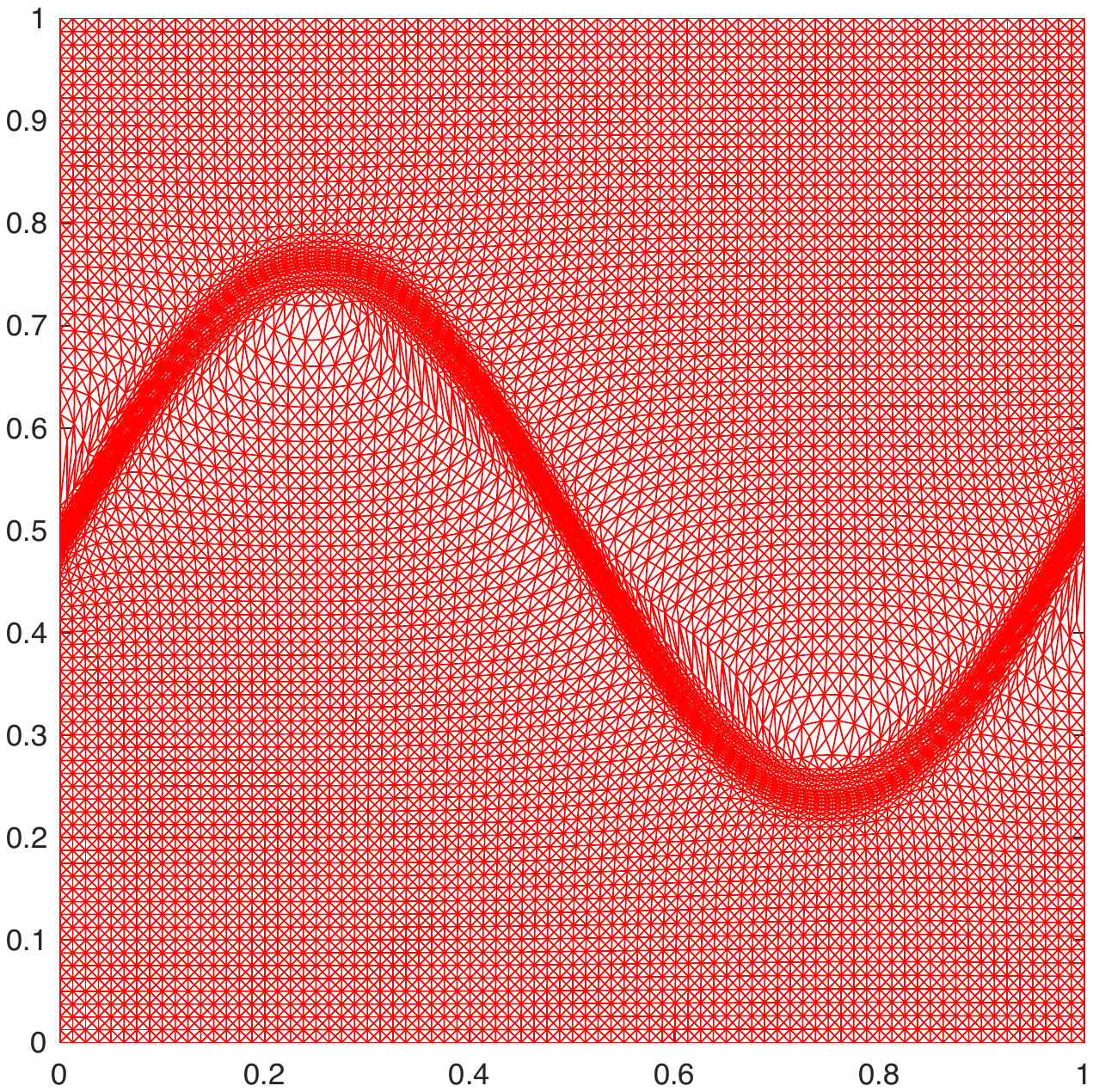}
\end{center}
\centerline{(a) New functional}\
\end{minipage}
\hspace{2mm}
\begin{minipage}[t]{2.1in}
\begin{center}
\includegraphics[width=2.1in]{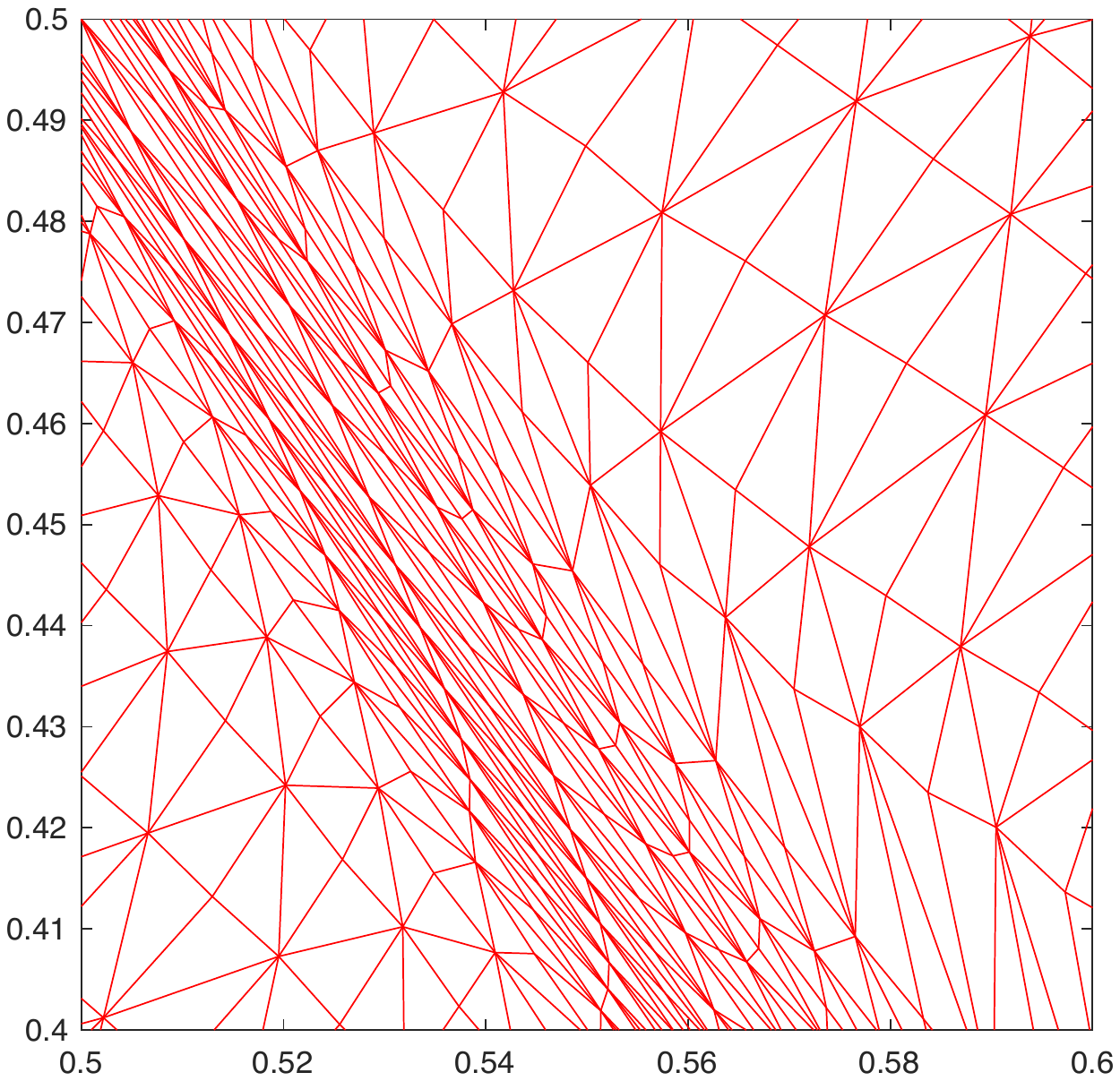}
\end{center}
\end{minipage}
}
\hbox{
\begin{minipage}[t]{1.05in}
\mbox{ }
\end{minipage}
\begin{minipage}[t]{2.1in}
\begin{center}
\includegraphics[width=2.1in]{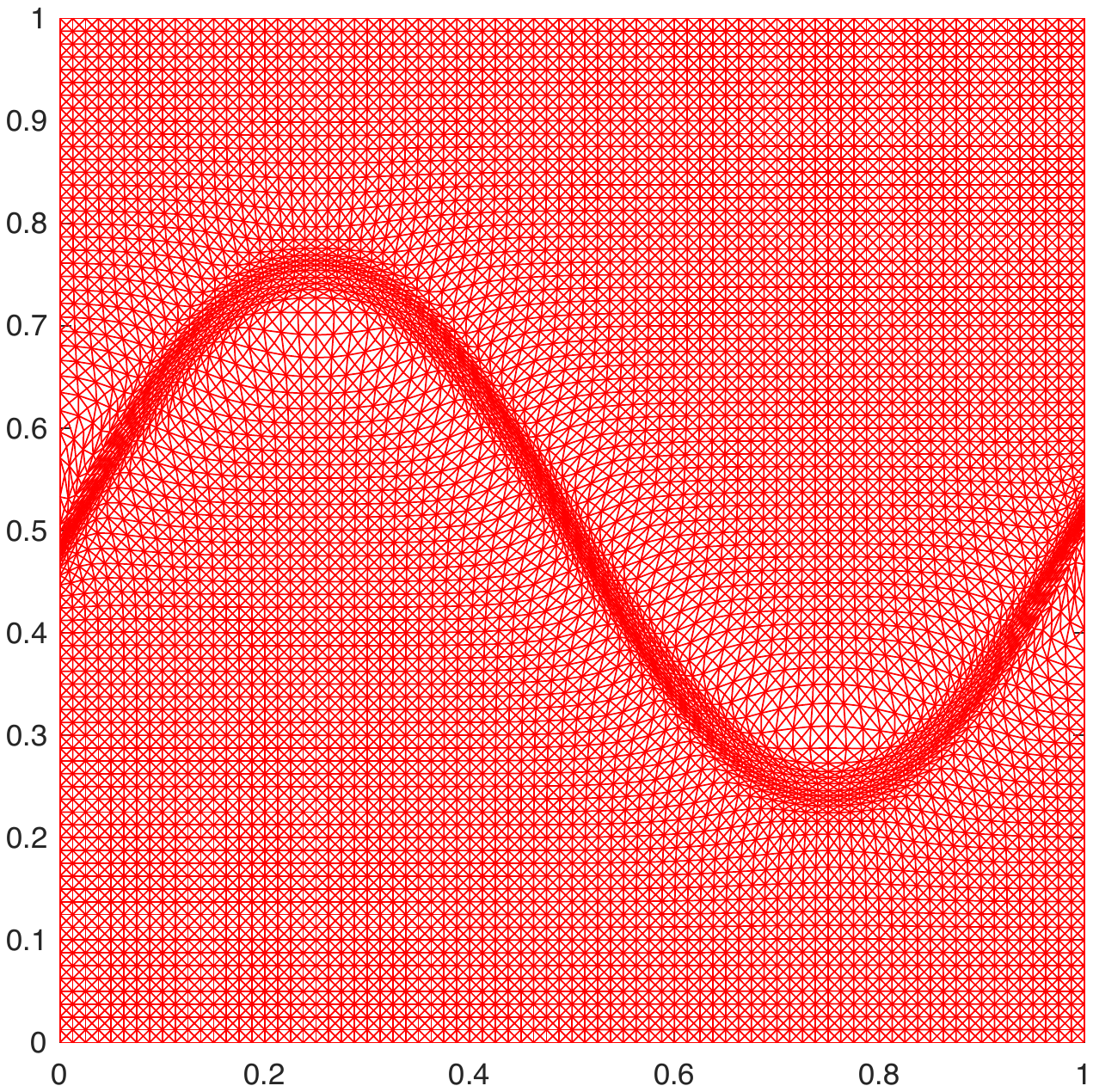}
\end{center}
\centerline{(b) Existing functional}\
\end{minipage}
\hspace{2mm}
\begin{minipage}[t]{2.1in}
\begin{center}
\includegraphics[width=2.1in]{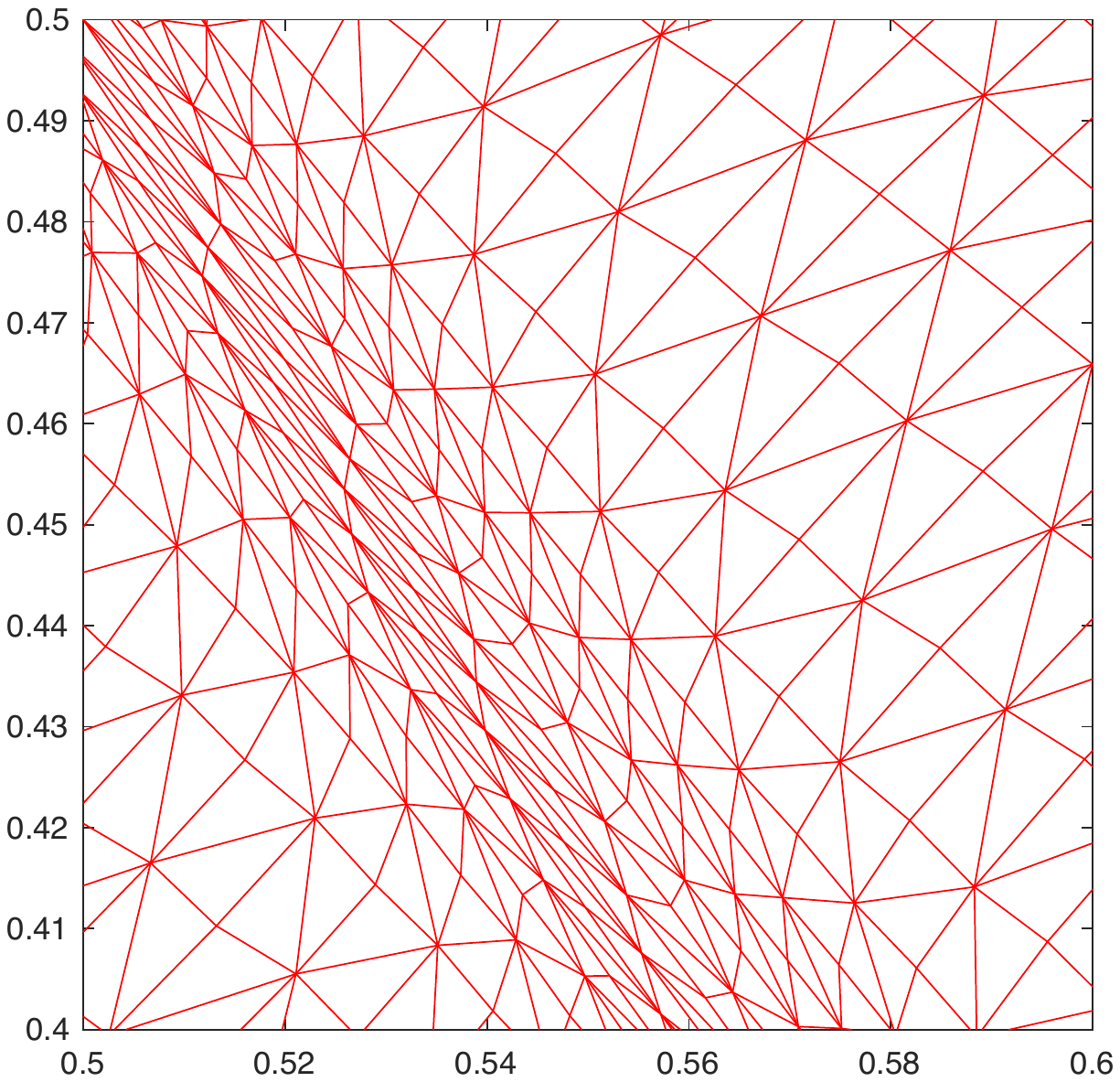}
\end{center}
\end{minipage}
}
\caption{Example~\ref{Exam5.1} with more anisotropic features. Adaptive meshes (left) and close-ups near
the inflection point (right) with $N=25600$.}
\label{fig:exam5.1-100mesh}
\end{center}
\end{figure}

\begin{table}[htb]
\caption{Mesh quality measures and the $L^2$ norm of linear interpolation error for Example~\ref{Exam5.1} with more anisotropic features.}
\begin{center}
\begin{tabular}{| c | c | c | c | c | c |} 
\hline
  Functional & N & $Q_{geo}$ & $Q_{eq}$ & $Q_{ali}$ & error \\
\hline
\multirow{3}{6em}{Existing} & 1600 & 1.626 & 1.155 & 1.034 & 1.807e-2\\ 
& 6400 & 1.548 & 1.312 & 1.058 & 3.942e-3\\
& 25600 & 1.279 & 1.553 & 1.107 & 2.462e-3\\ 
\hline
\multirow{3}{6em} {New} & 1600 &  2.059 & 1.148 & 1.031 & 1.232e-2\\
& 6400 &  2.203 & 1.249 & 1.028 & 2.616e-3 \\ 
& 25600 &  1.894 & 1.436 & 1.067 & 1.261e-3 \\ 
\hline
\end{tabular}
\label{table-5.1-100}
\end{center}
\end{table}

\end{exam}


\begin{exam}
\label{Exam5.2}
In this example, we generate adaptive meshes for a five sphere figure modeled by
\begin{align*}
   u(x,y) = &\tanh\left(30\left(X^2+Y^2-\frac{1}{8}\right)\right)
              + \tanh\left(30\left((X-0.5)^2+(Y-0.5)^2-\frac{1}{8}\right)\right)\\
     & + \tanh\left(30\left((X-0.5)^2+(Y+0.5)^2-\frac{1}{8}\right)\right)\\
     &+ \tanh\left(30\left((X+0.5)^2+(Y-0.5)^2-\frac{1}{8}\right)\right)\\
     & +\tanh\left(30\left((X+0.5)^2+(Y+0.5)^2-\frac{1}{8}\right)\right) ,
\end{align*}
where $X = -2 + 4x$ and $Y = -2 + 4y$. We integrate the MMPDE (\ref{MMPDEx}) to $t = 0.5$.

Fig.~\ref{fig:exam5.2-mesh} shows the meshes and close-ups of both functionals for this example.
Studying the figure we see that the new functional provides a mesh with accurate shape and size
adaptation. This can be further confirmed by the quality measures and the linear interpolation error
given in Table~\ref{table-5.2}. One may notice that the mesh has smaller values of $Q_{geo}$
and thus is less skew than those in the previous example. This may be due to the fact that the function
in this example is more isotropic than that in the previous example. Moreover, the linear interpolation error
behaves like $\mathcal{O}(N^{-1})$, showing a second-order convergence rate.

\begin{figure}[htb]
\begin{center}
\hbox{
\begin{minipage}[t]{2.1in}
\begin{center}
\includegraphics[width=2.1in]{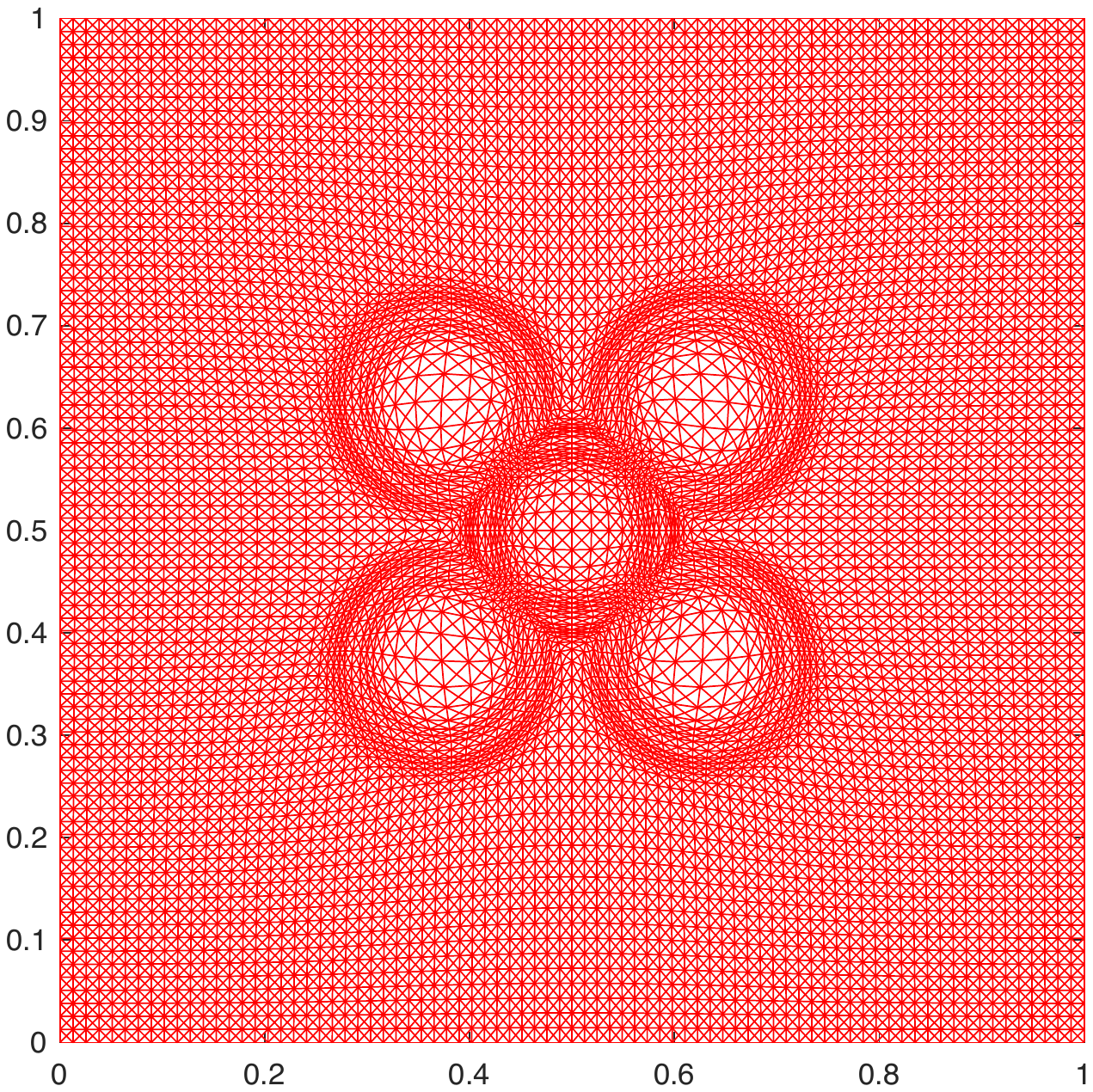}
\end{center}
\centerline{(a) New functional}
\end{minipage}
\hspace{2mm}
\begin{minipage}[t]{2.1in}
\begin{center}
\includegraphics[width=2.1in]{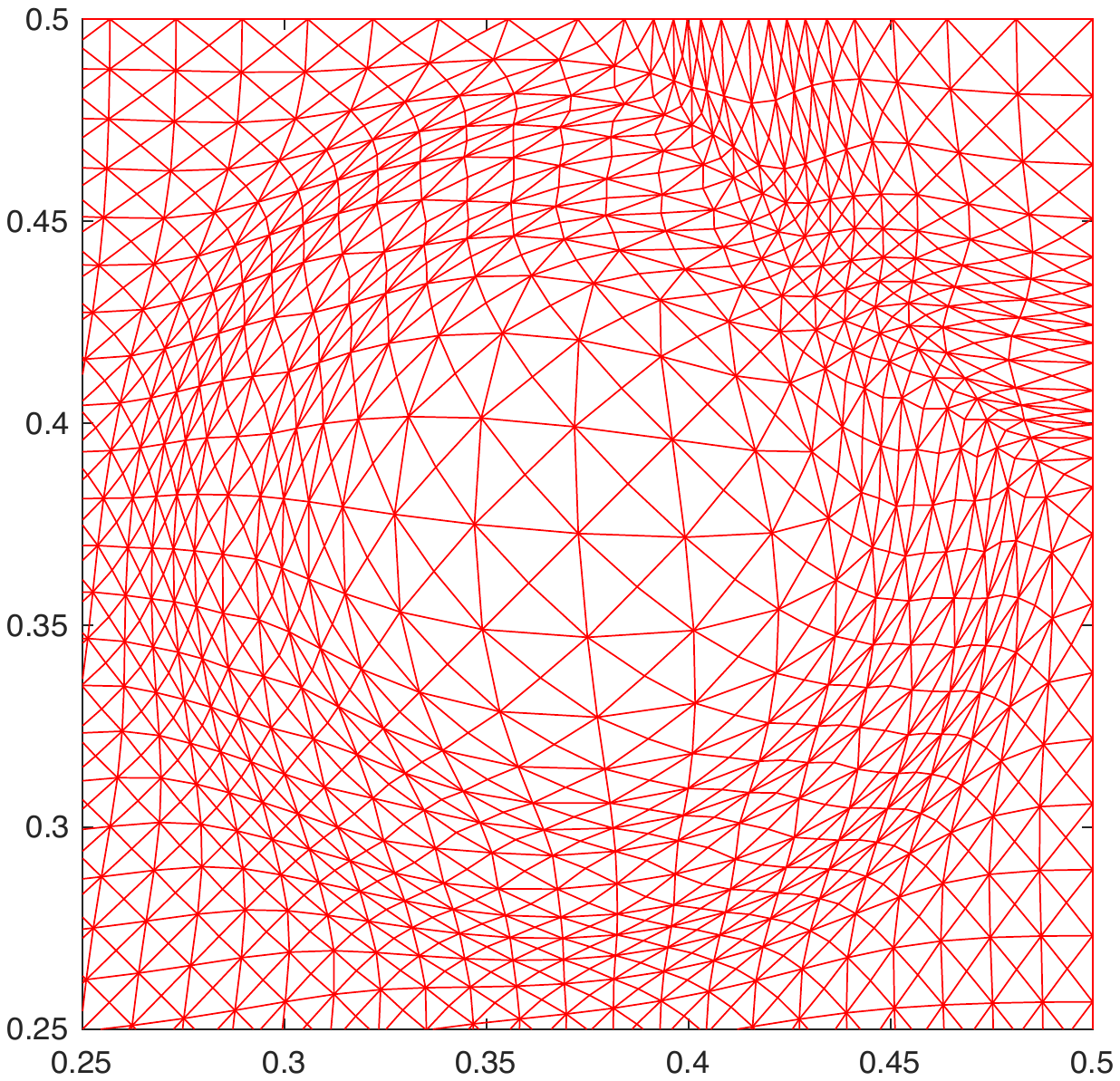}
\end{center}
\end{minipage}
\hspace{2mm}
\begin{minipage}[t]{2.1in}
\begin{center}
\includegraphics[width=2.1in]{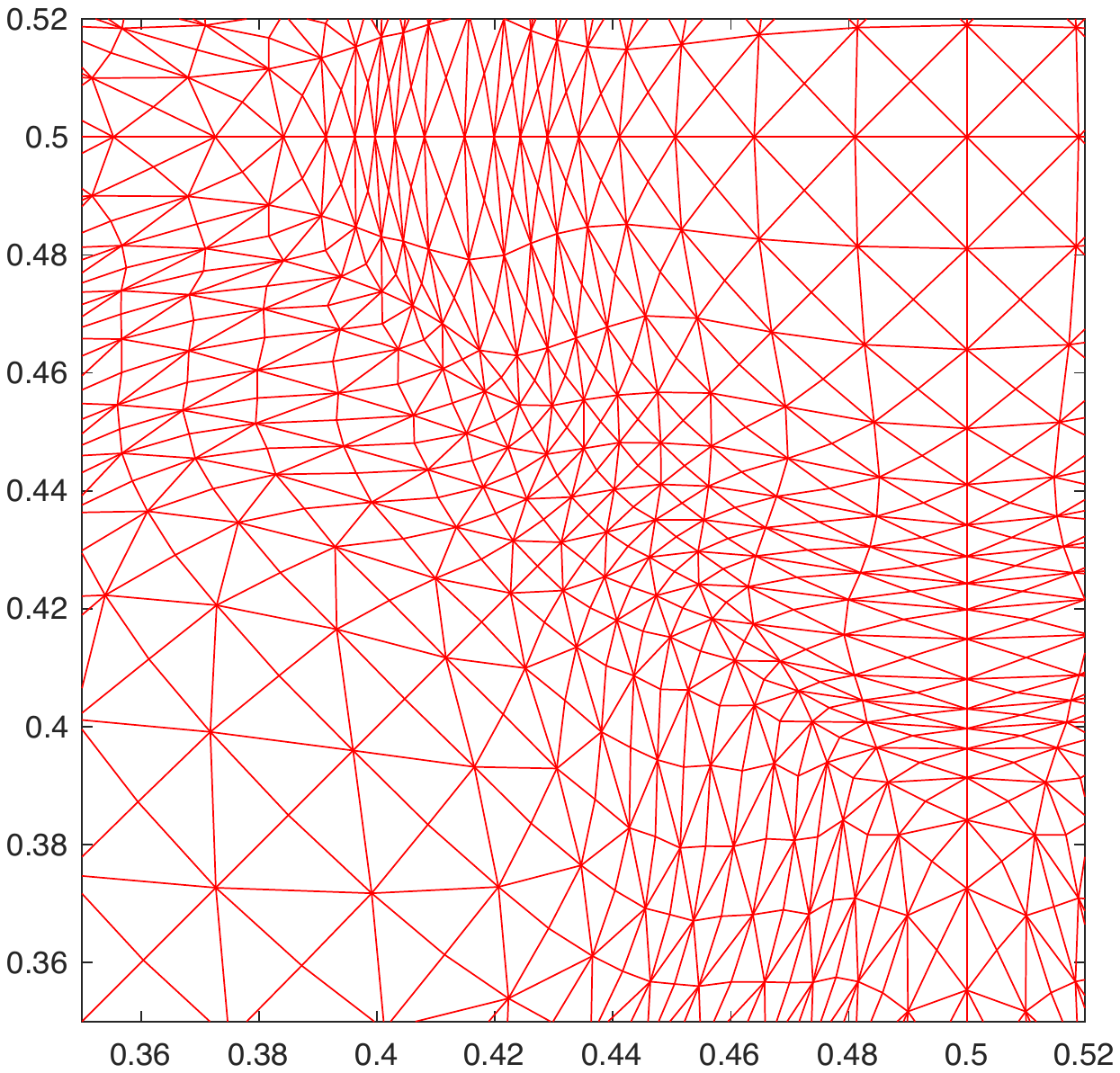}
\end{center}
\end{minipage}
}
\centering
\hbox{
\begin{minipage}[t]{2.1in}
\begin{center}
\includegraphics[width=2.1in]{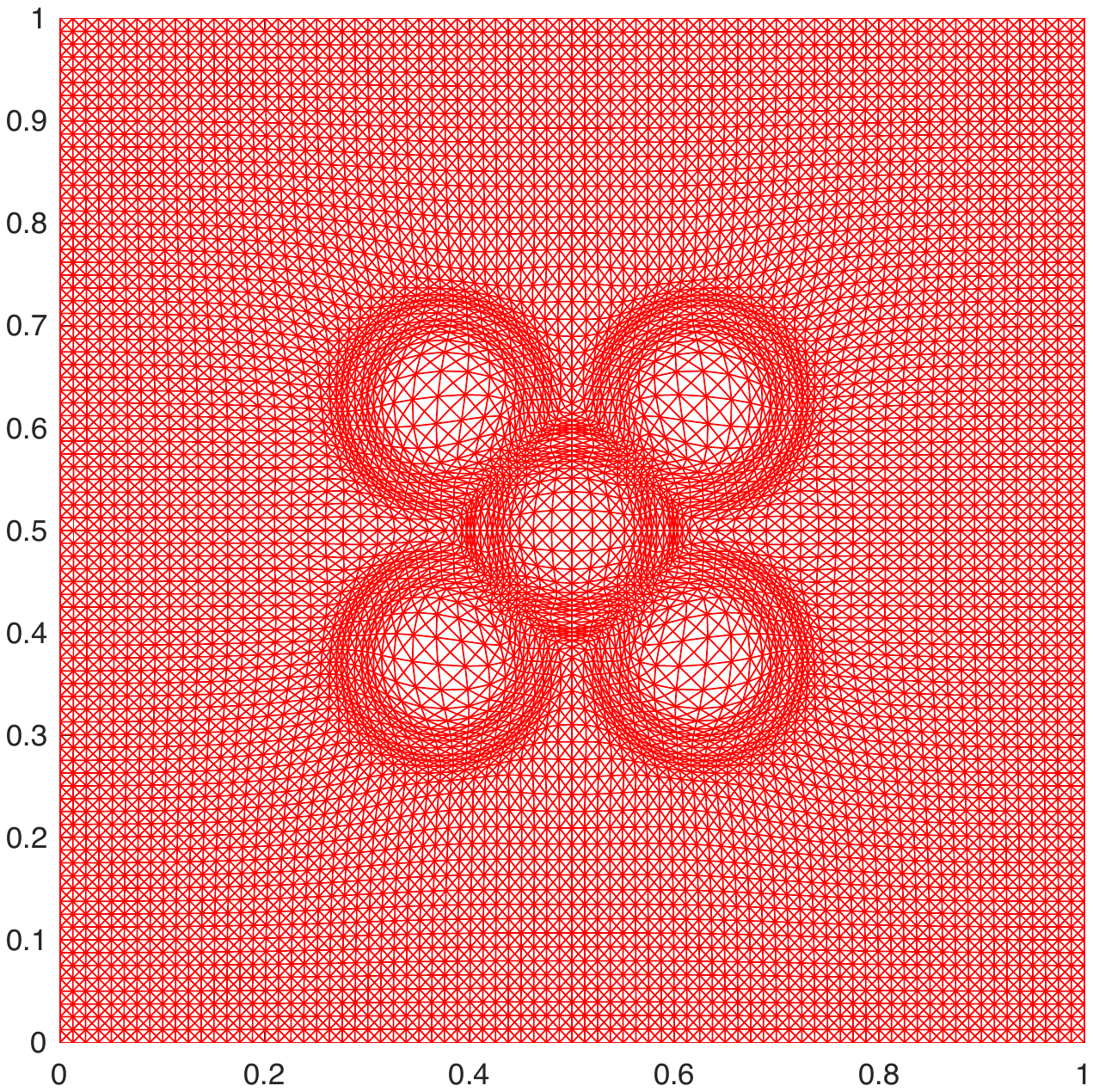}
\end{center}
\centerline{(a) Existing functional}
\end{minipage}
\hspace{2mm}
\begin{minipage}[t]{2.1in}
\begin{center}
\includegraphics[width=2.1in]{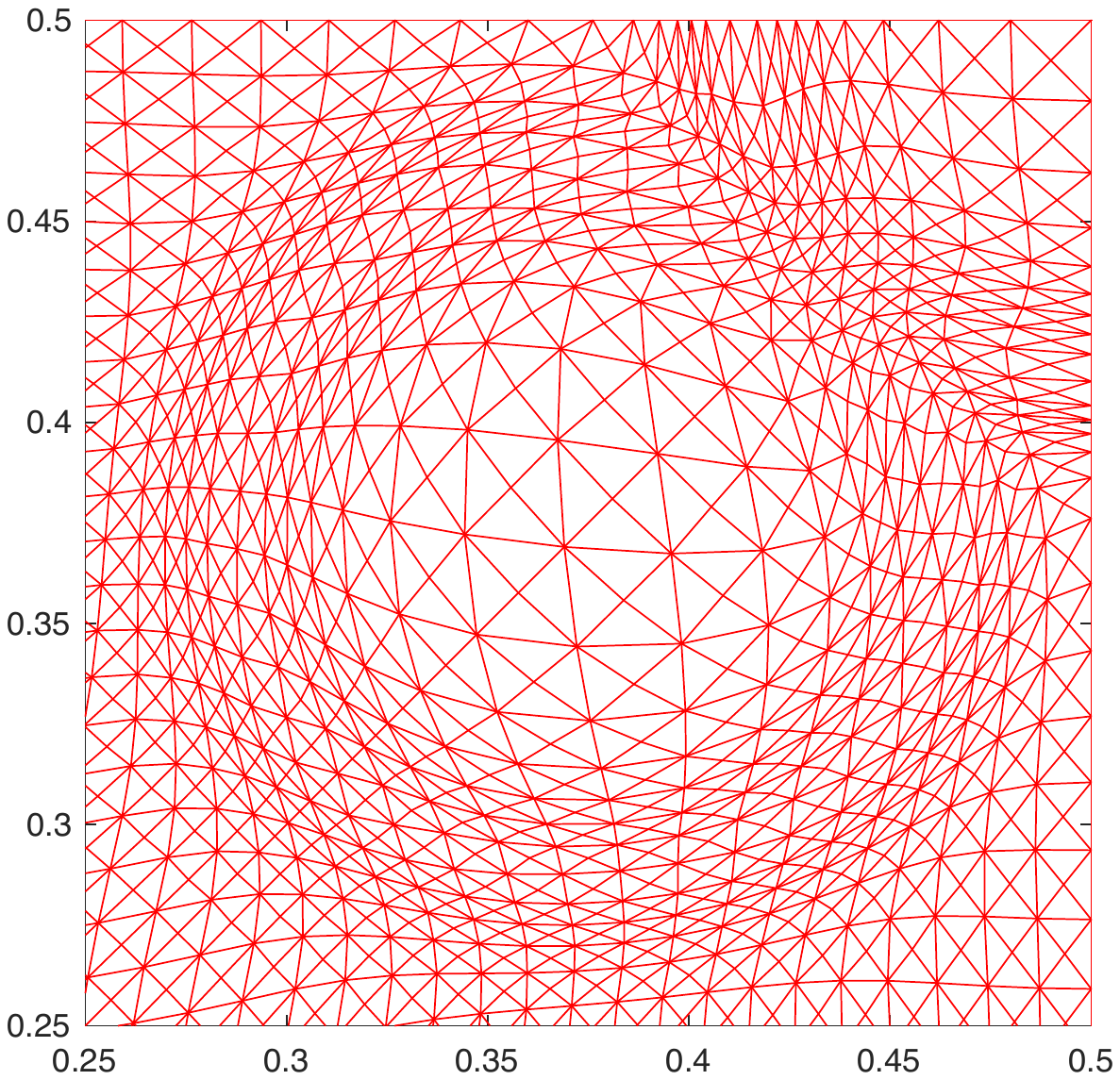}
\end{center}
\end{minipage}
\hspace{2mm}
\begin{minipage}[t]{2.1in}
\begin{center}
\includegraphics[width=2.1in]{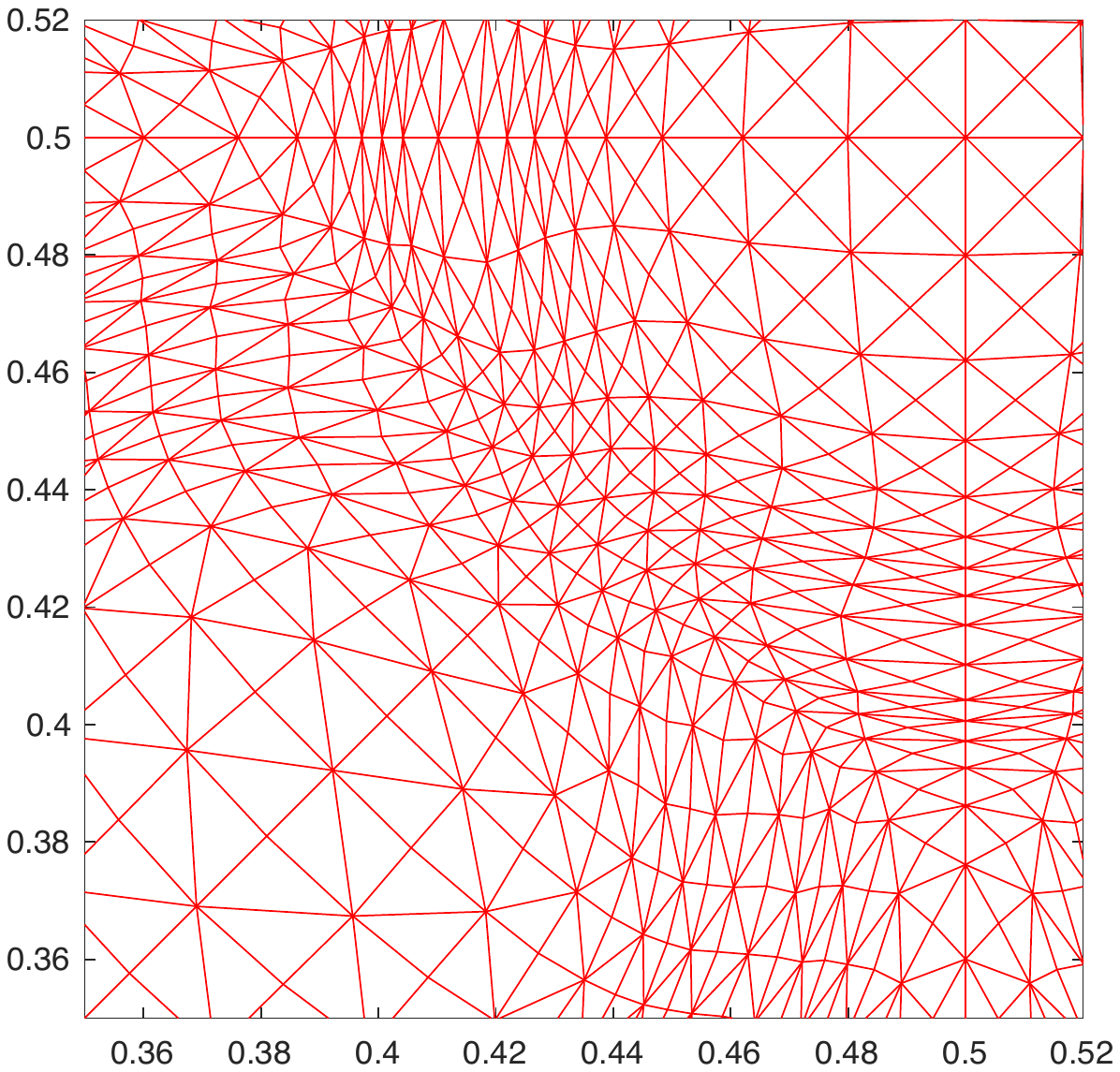}
\end{center}
\end{minipage}
}\
\caption{Example~\ref{Exam5.2}. Example meshes (left), close-ups near the circle meeting the boundary layer (middle),
and a closer version of the circle meeting the boundary layer (right) with $N=25600$.}
\label{fig:exam5.2-mesh}
\end{center}
\end{figure}

Fig.~\ref{fig:exam5.2-energy} shows the energy and minimum volume of the elements as functions of time.
One can see that $I_h$ is decreasing and converging faster for the new functional than for the existing functional, and that $|K|_{\min}$ is bounded by about $10^{-5}$.
Moreover, the results and performance of the new functional are similar to those with the existing functional.
\end{exam}

\begin{table}[htb]
\caption{Mesh quality measures and the $L^2$ norm of linear interpolation error for Example~\ref{Exam5.2}.}
 \begin{center}
\begin{tabular}{| c | c | c | c | c | c |} 
\hline
  Functional & N & $Q_{geo}$ & $Q_{eq}$ & $Q_{ali}$ & error \\
\hline
\multirow{3}{6em}{Existing} & 1600 & 1.051 & 1.134 & 1.056 & 6.954e-2\\ 
& 6400 & 1.094 & 1.231 & 1.057 & 1.326e-2\\
& 25600 & 1.122 & 1.342 & 1.040 & 3.068e-3 \\ 
\hline
\multirow{3}{6em} {New} & 1600 & 1.031 & 1.188 & 1.026 & 6.946e-2 \\
& 6400 & 1.076 & 1.300 & 1.030 & 1.794e-2  \\
& 25600 & 1.137 & 1.370  & 1.030 & 3.310e-3 \\
\hline
\end{tabular}
\label{table-5.2}
\end{center}
\end{table}

\begin{figure}[htb]
\begin{center}
\hbox{
\begin{minipage}[t]{2in}
\begin{center}
\begin{tikzpicture}
\node (img) {\includegraphics[width=2.25in]{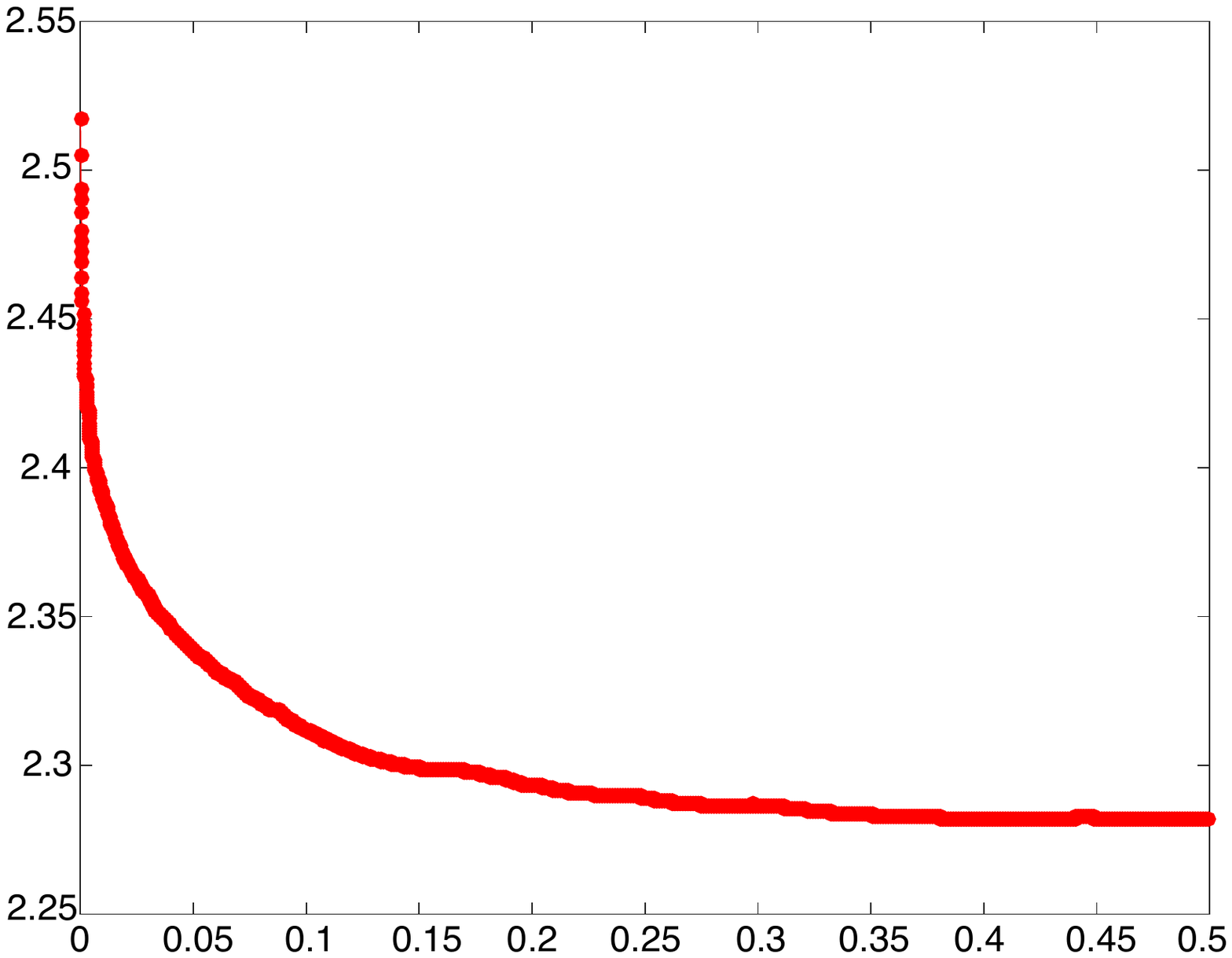}};
\node[below=of img, node distance=0cm, yshift=1cm] {t};
  \node[left=of img, node distance=0cm, rotate=90, anchor=center,yshift=-0.7cm] {$I_h$};
  \end{tikzpicture}
{(a) New functional $I_h$}
\end{center}
\end{minipage}
\hspace{30mm}
\begin{minipage}[t]{2in}
\begin{center}
\begin{tikzpicture}
\node (img) {\includegraphics[width=2.25in]{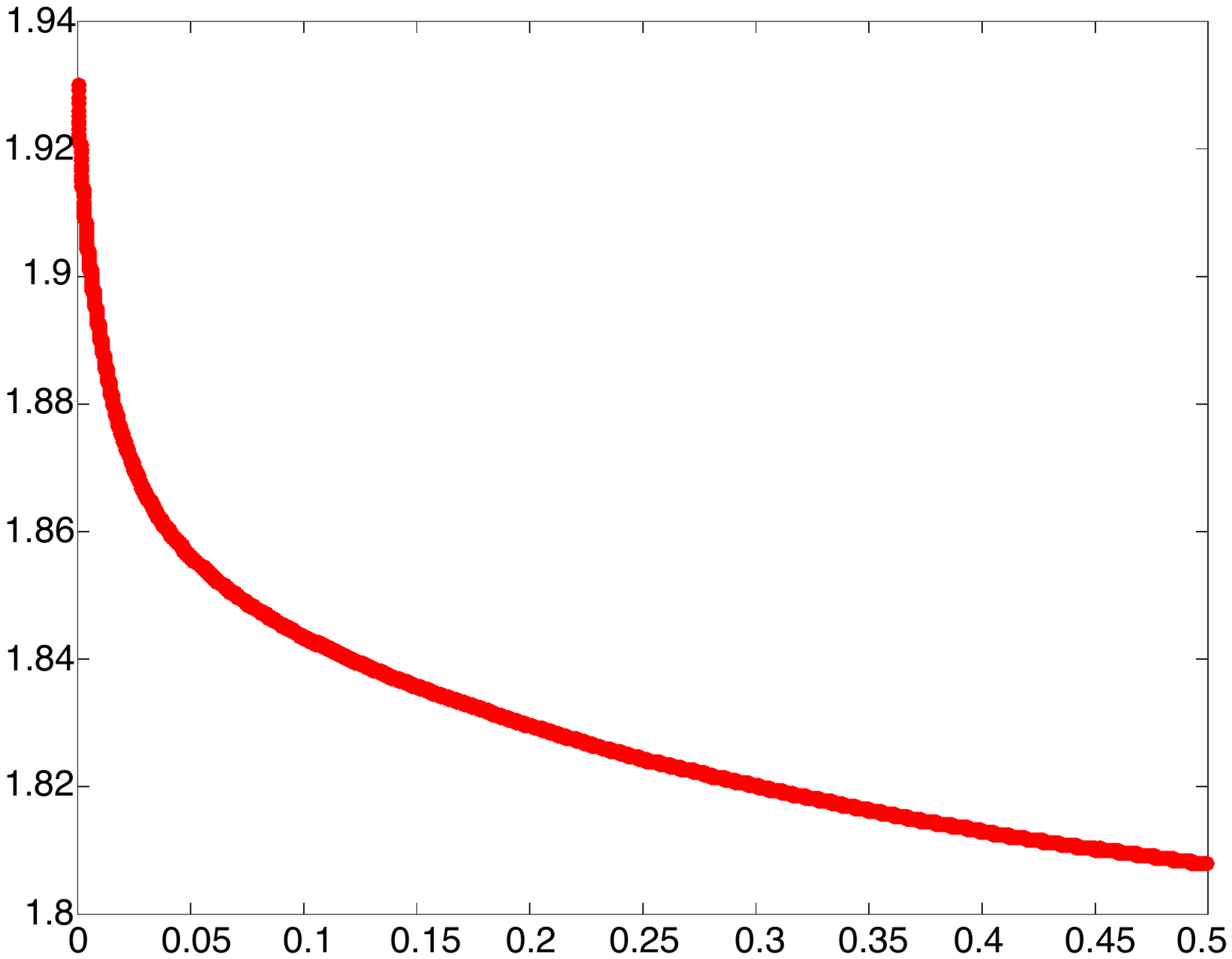}};
\node[below=of img, node distance=0cm, yshift=1cm] {t};
  \node[left=of img, node distance=0cm, rotate=90, anchor=center,yshift=-0.7cm] {$I_h$};
  \end{tikzpicture}
{(b) Existing functional $I_h$}
\end{center}
\end{minipage}
}
\vspace{10mm}
\hbox{
\begin{minipage}[t]{2in}
\begin{center}
\begin{tikzpicture}
\node (img) {\includegraphics[width=2.3in]{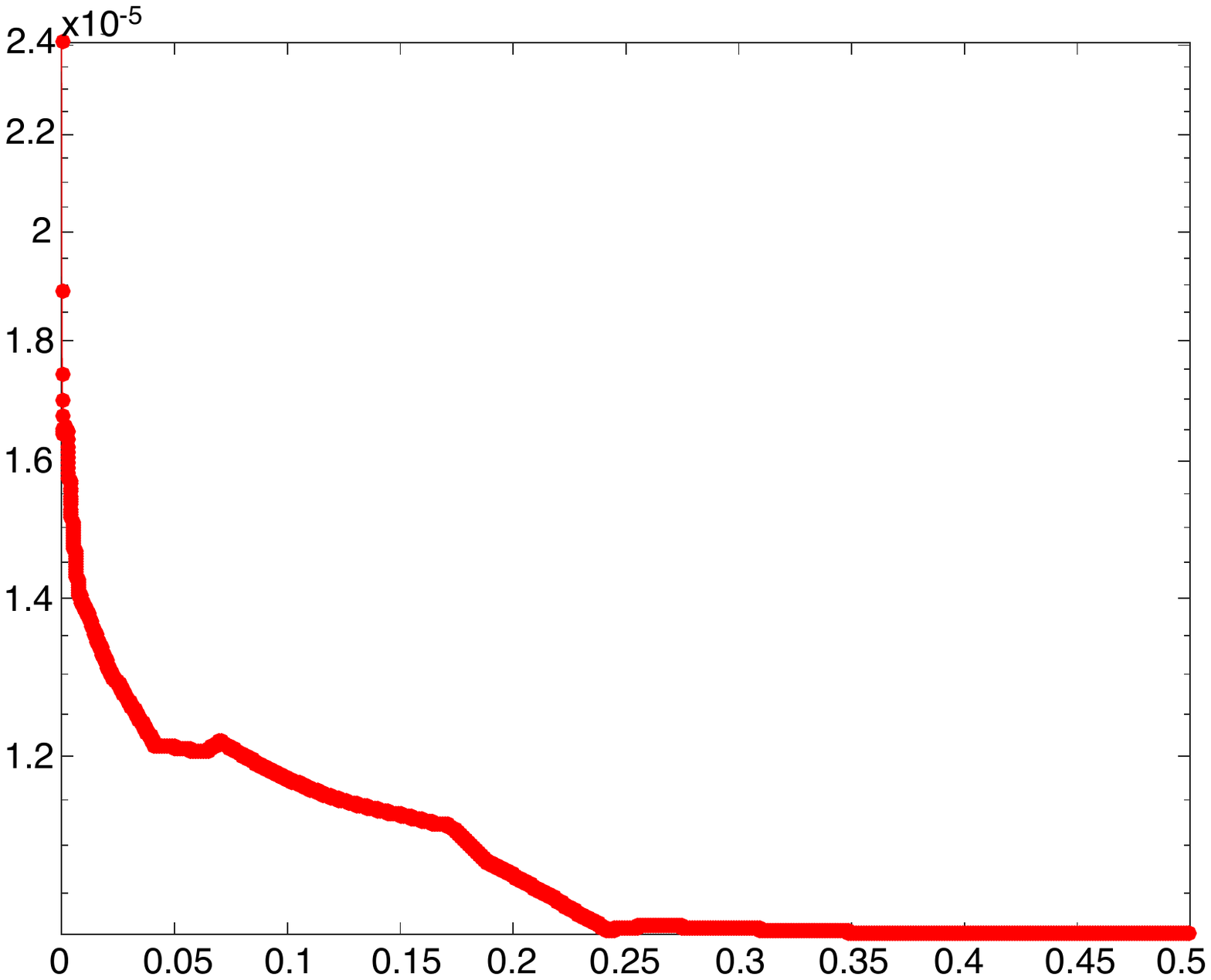}};
\node[below=of img, node distance=0cm, yshift=1cm] {t};
  \node[left=of img, node distance=0cm, rotate=90, anchor=center,yshift=-0.7cm] {$|K|_{\min}$};
  \end{tikzpicture}
{(c) New functional $|K|_{\min}$}
\end{center}
\end{minipage}
\hspace{30mm}
\begin{minipage}[t]{2in}
\begin{center}
\begin{tikzpicture}
\node (img) {\includegraphics[width=2.3in]{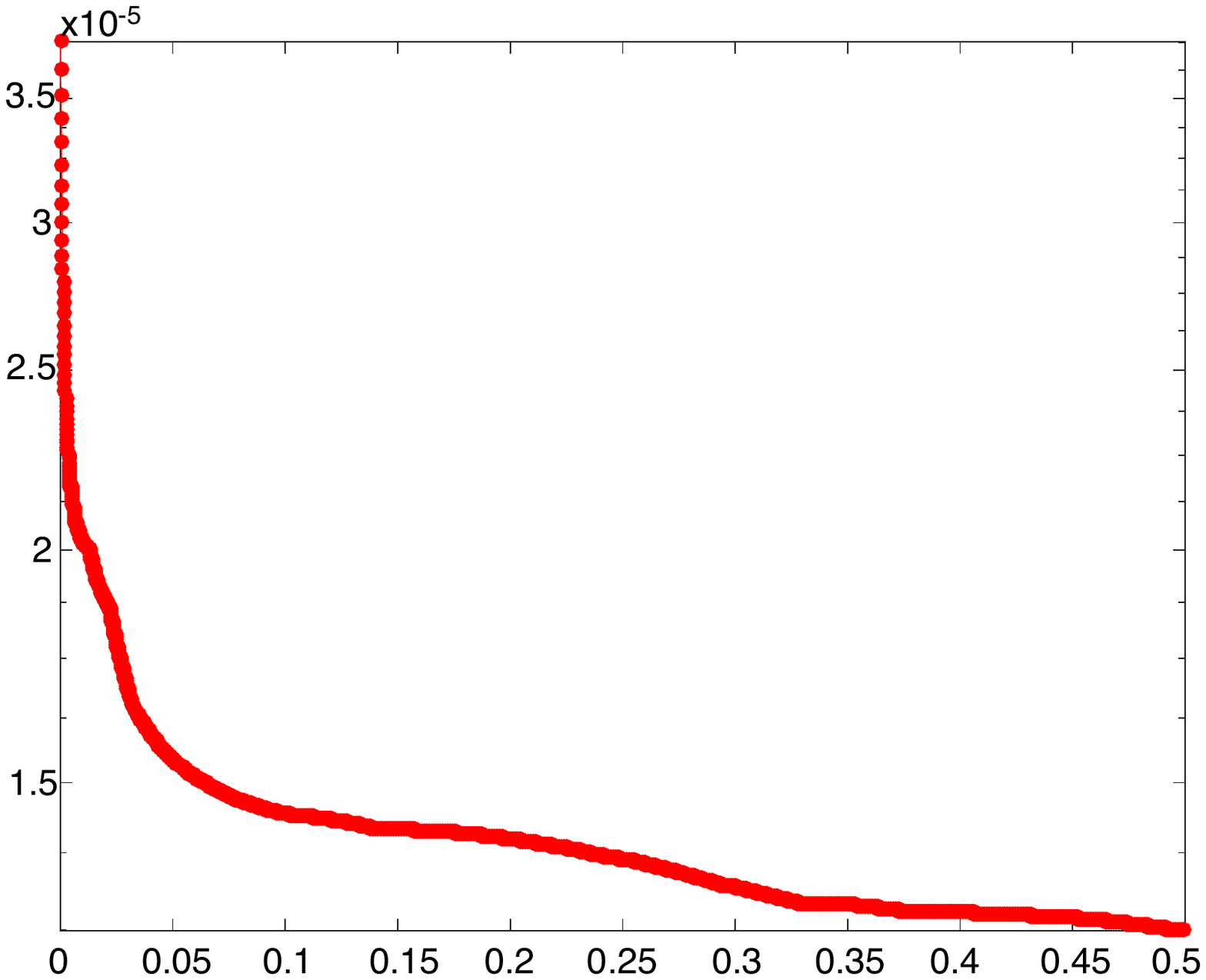}};
\node[below=of img, node distance=0cm, yshift=1cm] {t};
  \node[left=of img, node distance=0cm, rotate=90, anchor=center,yshift=-0.7cm] {$|K|_{\min}$};
  \end{tikzpicture}
{(d) Existing functional $|K|_{\min}$}
\end{center}
\end{minipage}
}
\caption{Example~\ref{Exam5.2}. The energy and minimum element volume are plotted as functions of $t$ with $N=25600$.}
\label{fig:exam5.2-energy}
\end{center}
\end{figure}


\begin{exam}
\label{Exam5.3}
In the final example, we solve the initial-boundary value problem of a special case of Burgers' equation
\[
u_t =10^{-3} \Delta u - u u_x - u u_y, \quad \text{ in } \Omega = (-1,1) \times(-1,1)
\]
subject to a homogeneous boundary condition and the initial condition
\[
u(x,y,0) = e^{-36.8414 (x^2+y^2)}, \quad \text{ in } \Omega .
\]
The partial differential equation is discretized in space using linear finite elements and in time using
the fifth-order Radau IIA method \cite{HW96}. It is solved with the mesh equation in an alternating manner \cite{HR}.
For the following results, we start at $t=0.25$ and run to a final time of $t=1.25$.  

The meshes and close-ups for this example are given in Fig.~\ref{fig:burgers}. Studying the figure we see that the new functional mesh is much more adaptive when compared to the existing functional mesh.  The mesh associated with the new functional provides good shape and size adaptation. As seen in the close-ups, the concentration of mesh elements in the region with large curvature is high which, as we have seen in Examples \ref{Exam5.1} and \ref{Exam5.2}, is consistent with the Hessian based metric tensor. Moreover, the elements for the new functional are much more skew (with respect to the Euclidean metric) in the regions with larger curvature which is confirmed in Table~\ref{table-burgers}
with $Q_{geo}\approx 17.01$. 
  
\begin{figure}[htb]
\begin{center}
\hbox{
\begin{minipage}[t]{2.1in}
\begin{center}
\includegraphics[width=2.1in]{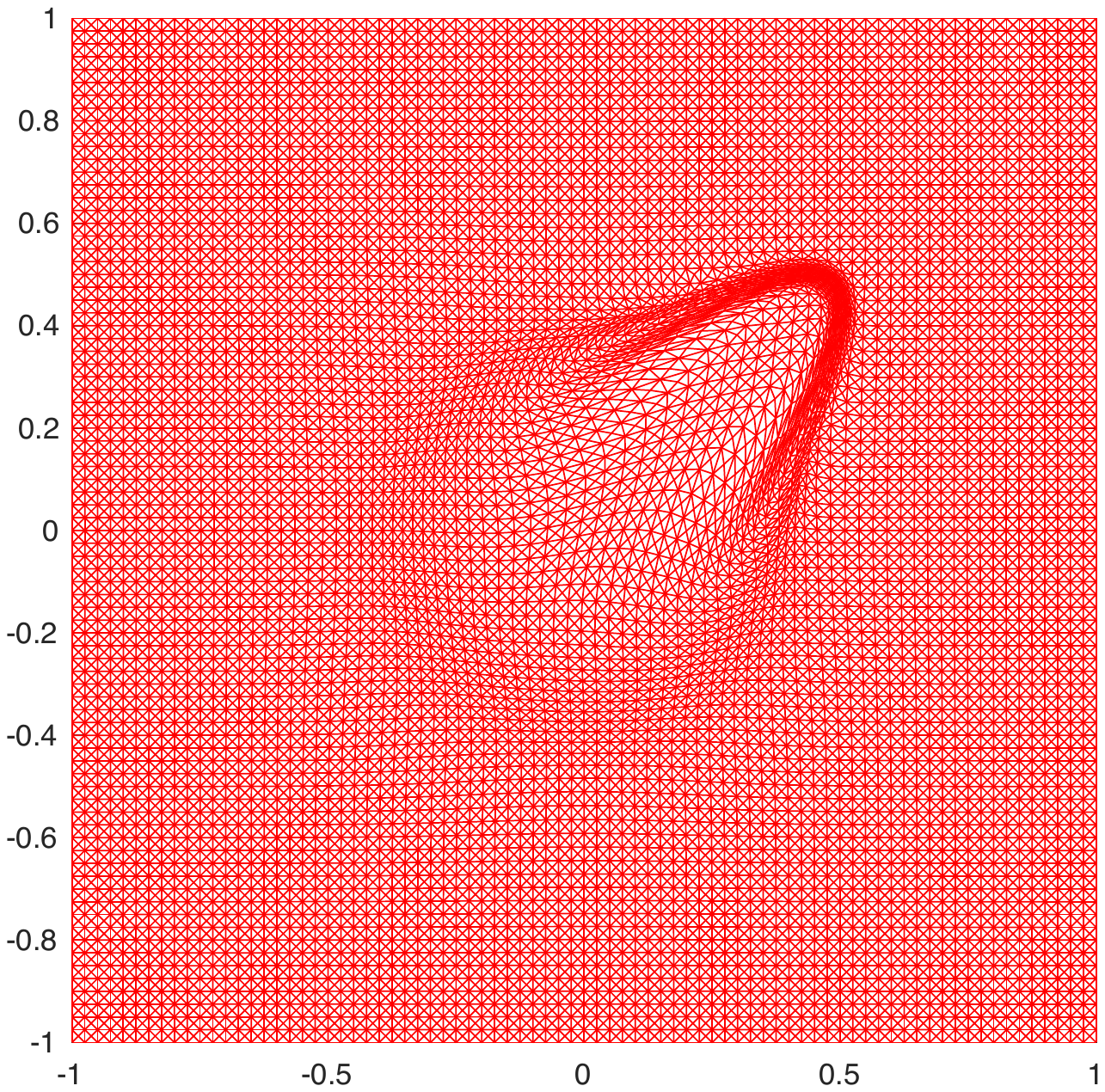}
\end{center}
\centerline{(a) New functional}\
\end{minipage}
\hspace{2mm}
\begin{minipage}[t]{2.1in}
\begin{center}
\includegraphics[width=2.1in]{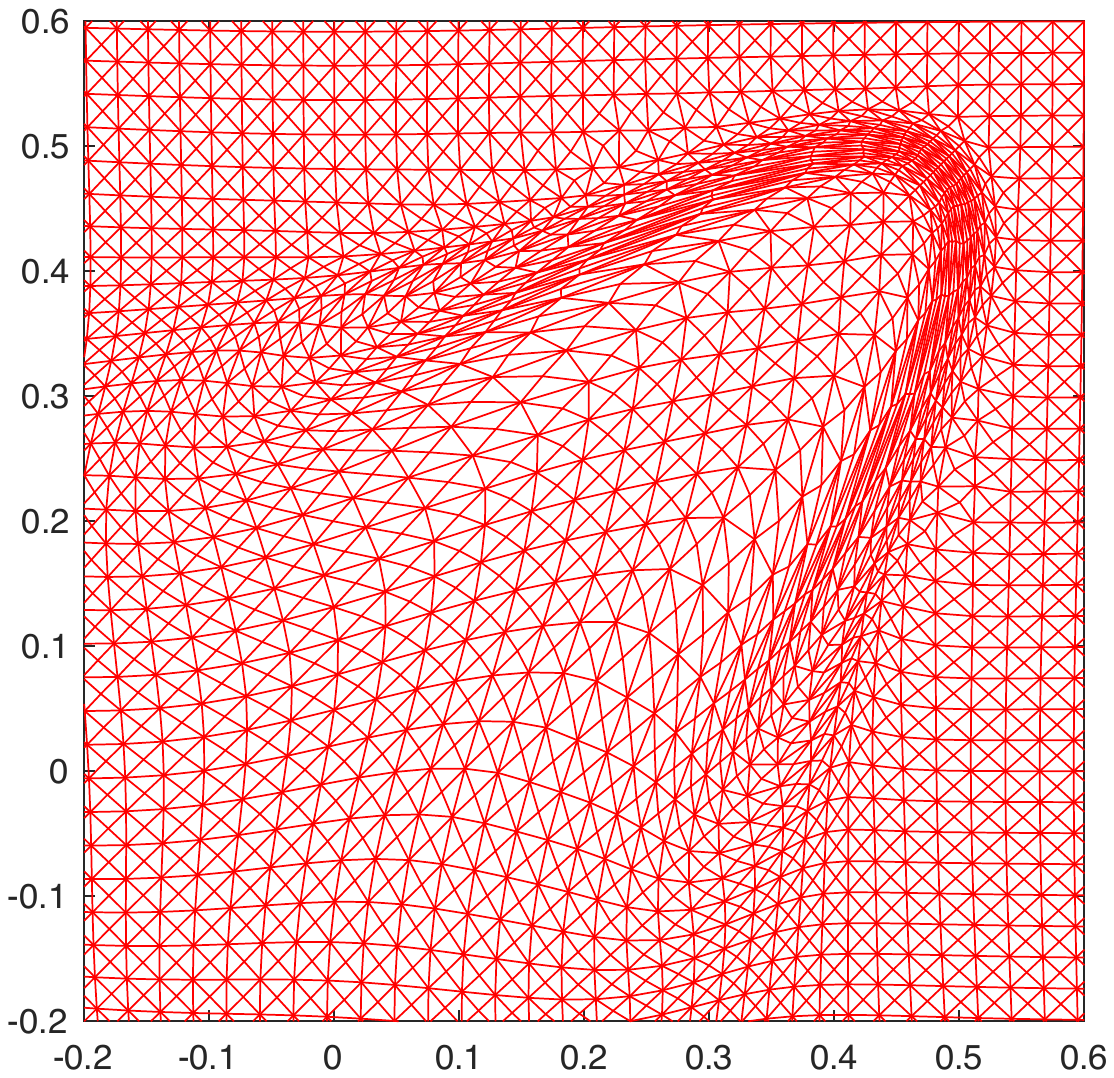}
\end{center}
\end{minipage}
\hspace{2mm}
\begin{minipage}[t]{2.1in}
\begin{center}
\includegraphics[width=2.1in]{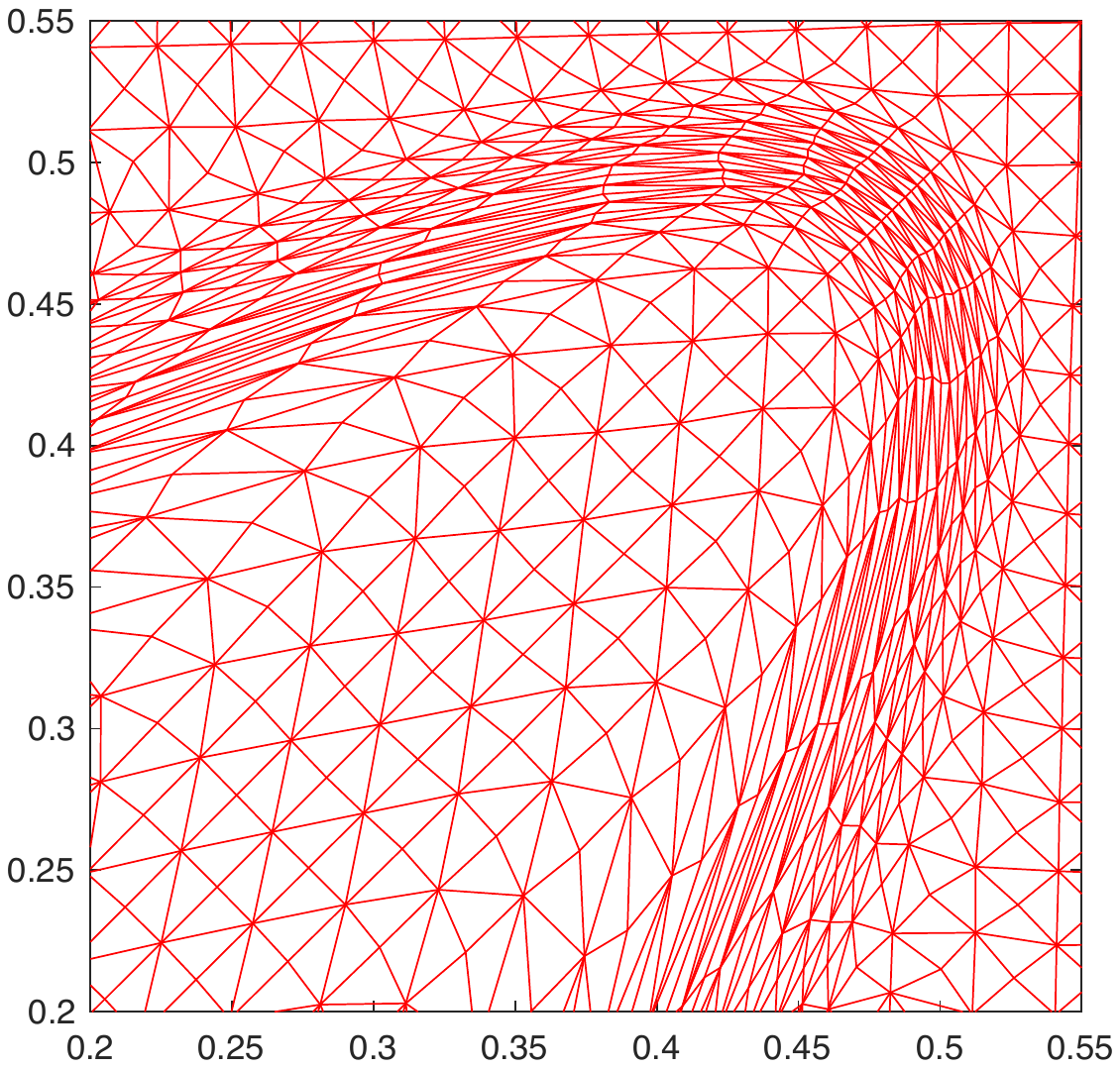}
\end{center}
\end{minipage}
}
\hbox{
\begin{minipage}[t]{2.1in}
\begin{center}
\includegraphics[width=2.1in]{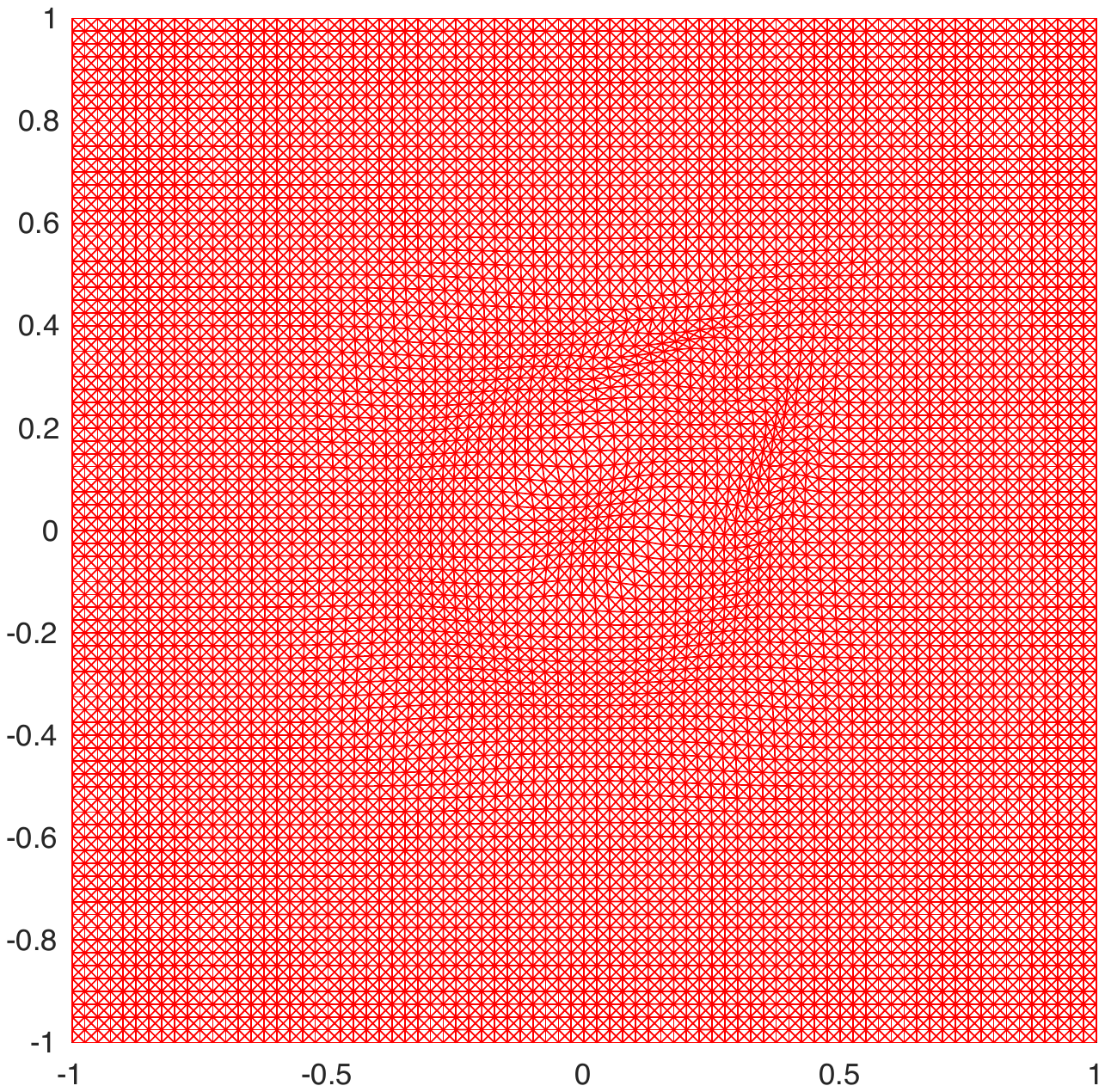}
\end{center}
\centerline{(b) Existing functional}\
\end{minipage}
\hspace{2mm}
\begin{minipage}[t]{2.1in}
\begin{center}
\includegraphics[width=2.1in]{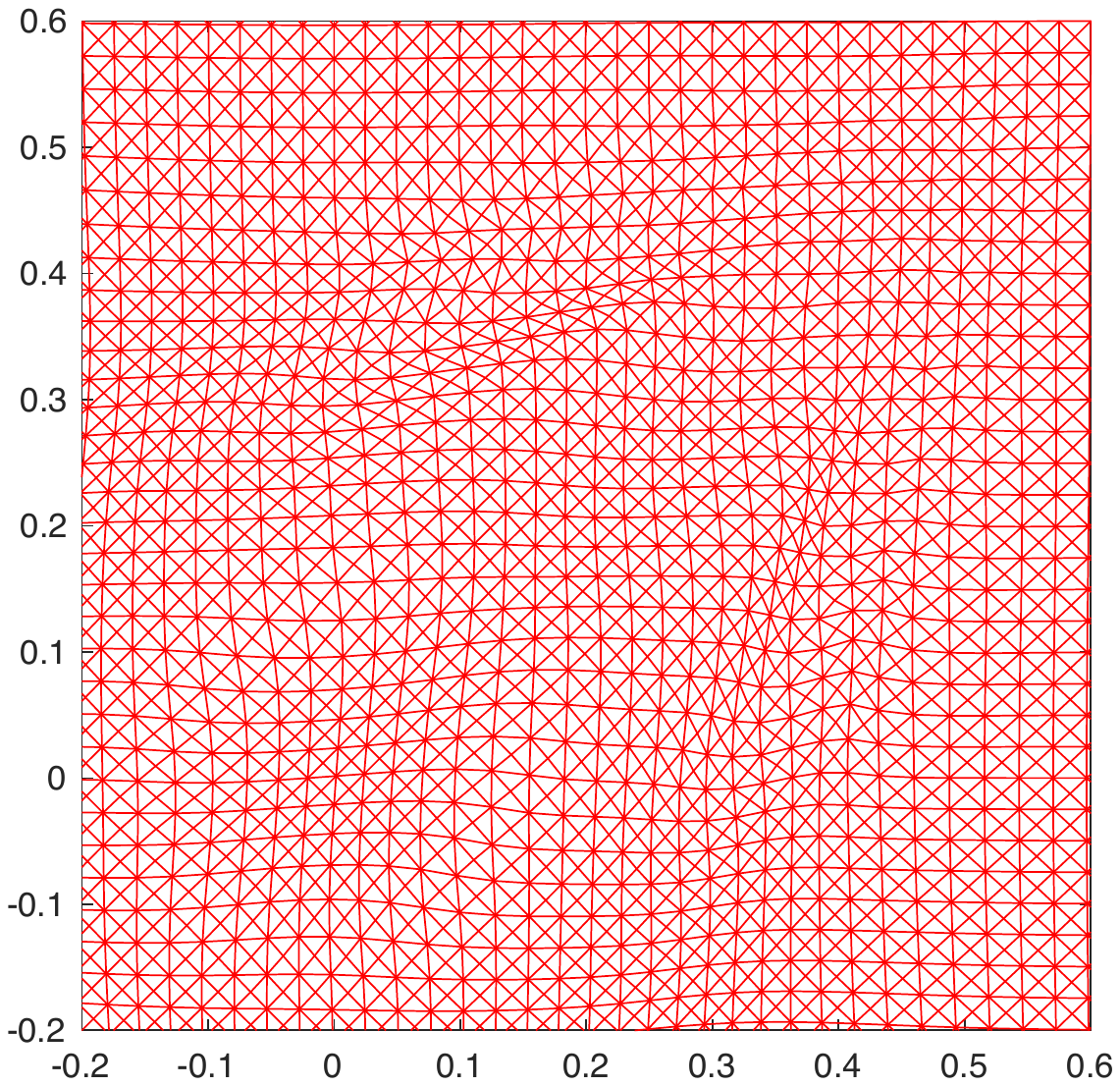}
\end{center}
\end{minipage}
\hspace{2mm}
\begin{minipage}[t]{2.1in}
\begin{center}
\includegraphics[width=2.1in]{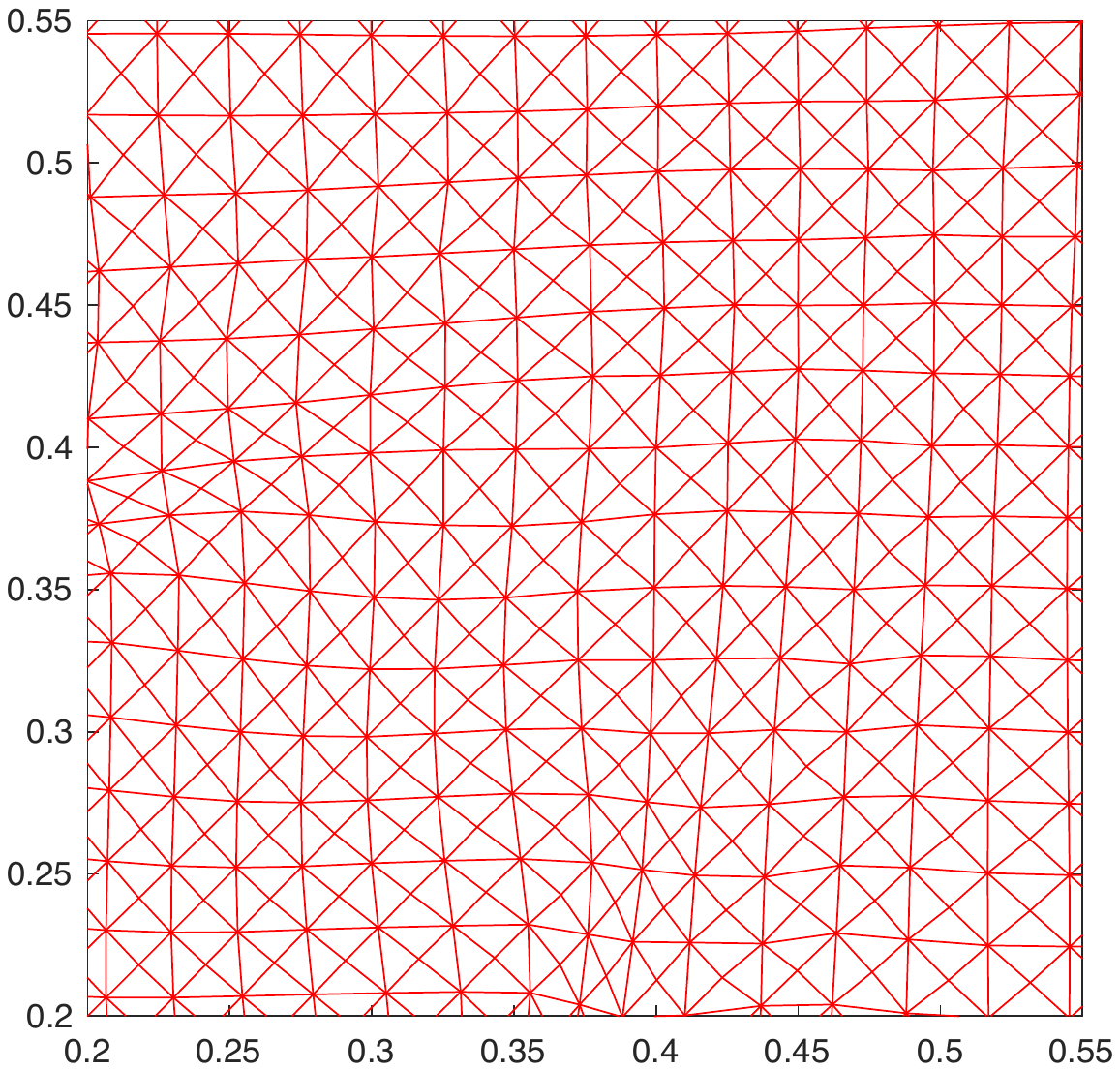}
\end{center}
\end{minipage}
}
\caption{Example~\ref{Exam5.3}. Example meshes (left), close-ups near the the tip (middle),
and a closer version of the tip (right) with $N=25600$.}
\label{fig:burgers}
\end{center}
\end{figure}

\begin{table}[htb]
\caption{Mesh quality measures for Example~\ref{Exam5.3}.}
 \begin{center}
\begin{tabular}{| c | c | c | c | c | c |} 
\hline
  Functional & N & $Q_{geo}$ & $Q_{eq}$ & $Q_{ali}$ \\
\hline
\multirow{3}{6em}{Existing} & 1600 & 1.502 & 5.696 & 1.842 \\ 
& 6400 & 1.934 & 14.20&  2.391\\
& 25600 &  1.677 & 31.77 & 3.426 \\ 
\hline
\multirow{3}{6em} {New} & 1600 & 2.130 & 4.705 & 1.577 \\ 
& 6400 &  8.215 & 6.470 & 2.731\\ 
& 25600 & 17.01 & 14.68 & 4.7111 \\ 
\hline
\end{tabular}
\label{table-burgers}
\end{center}
\end{table}
\end{exam}

\section{Conclusions and further comments}
\label{SEC:conclusion}

In the previous sections, we have introduced a new functional based on the equidistribution and alignment conditions.
The functional is formulated by directly combining these two conditions into one with only a single parameter. It should be pointed out that ($\ref{Huang2}$) does not contain $\theta$, a parameter that requires one to try to effectively balance the equidistribution and alignment conditions in ($\ref{Huang1}$).
We have proven a number of theoretical results for this new functional at the discrete level which are similar
to those of an existing functional that is also based on the equidistribution and alignment conditions
but contains an additional parameter. For example, the new functional
was proven to be coercive (Theorem~\ref{coercivity}).  With this, it was then shown that the element altitude and volumes of the mesh trajectory of the discrete MMPDE
associated with the new functional are bounded away from zero and the mesh trajectory stays nonsingular for all time
if it is nonsingular initially (Corollary 4.1). Moreover, Corollary 4.2 states that the value of the meshing functional decreases
monotonically along the mesh trajectory, while the latter has limit meshes that are critical points of the meshing functional.

The numerical results shown in this paper demonstrated that the new functional produces correct mesh concentration
and its performance is comparable to that of the existing functional which has been used successfully for various
applications. In addition, the numerical results validated the theoretical properties of the new functional.
It was shown that the meshing functional was monotonically decreasing and the minimum volume of the mesh element was bounded below as functions of time. From these results, we conclude that the new functional
is similar to the existing functional in both numerical performance and theoretical properties.

It should be noted that the numerical experiments provided in this work are limited.
In order to better understand the performance of the new functional, more work and
a variety of examples are necessary. Specifically, one of the main disadvantages of the new functional
is that it is not convex whereas the existing functional is known to be polyconvex and can be made
convex with the special choice of the parameter $\theta$ ($\theta = 1/2$).
With this in mind, it is hard to say how the non-convexity of the new functional affects the numerics. 
For the examples we tested, we did not experience any difficulty with computation or CPU time
but problems may occur in other examples.  This may be a topic for further investigations.\\

{\textbf{Acknowledgement.}} The authors would like to thank the anonymous referees for their valuable comments in improving the quality of the paper.


\end{document}